\def\ps@pprintTitle{%
 \let\@oddhead\@empty
 \let\@evenhead\@empty
 \def\@oddfoot{}%
 \let\@evenfoot\@oddfoot}
\newtheorem{lemma}{Lemma}
\newtheorem{assump}{Assumption}
\newtheorem{theorem}{Theorem}
\algnewcommand\algorithmicinput{\textbf{Indices and Definitions:}}
\algnewcommand\Definitions{\item[\algorithmicinput]}%
\def\thesismode{0}
\begin{document}

\begin{frontmatter}

\title{A Moment and Sum-of-Squares Extension of Dual Dynamic Programming with Application to Nonlinear Energy Storage Problems}
\author[a]{Marc Hohmann\corref{mycorrespondingauthor}}
\author[b]{Joseph Warrington}
\author[b]{John Lygeros}
\address[a]{Urban Energy Systems Group, Empa, Swiss Federal Laboratories for Materials Science and Technology, \"Uberlandstrasse 129, 8600 D\"ubendorf , Switzerland}
\address[b]{Automatic Control Laboratory, ETH Zurich, Physikstrasse 3, 8092 Z\"urich, Switzerland}

\cortext[mycorrespondingauthor]{Corresponding author}

\begin{abstract}
 We present a finite-horizon optimization algorithm that extends the established concept of Dual Dynamic Programming (DDP) in two ways. First, in contrast to the linear costs, dynamics, and constraints of standard DDP, we consider problems in which all of these can be polynomial functions. Second, we allow the state trajectory to be described by probability distributions rather than point values, and return approximate value functions fitted to these. The algorithm is in part an adaptation of sum-of-squares techniques used in the approximate dynamic programming literature. It alternates between a forward simulation through the horizon, in which the moments of the state distribution are propagated through a succession of single-stage problems, and a backward recursion, in which a new polynomial function is derived for each stage using the moments of the state as fixed data. The value function approximation returned for a given stage is the point-wise maximum of all polynomials derived for that stage. This contrasts with the piecewise affine functions derived in conventional DDP. We prove key convergence properties of the new algorithm, and validate it in simulation on two case studies related to the optimal operation of energy storage devices with nonlinear characteristics. The first is a small borehole storage problem, for which multiple value function approximations can be compared. The second is a larger problem, for which conventional discretized dynamic programming is intractable.

\end{abstract}

\begin{keyword}
Control, Dual dynamic programming, Moment/SOS techniques, Long-term energy storage management
\end{keyword}
\begin{acronym}
	\acro{ADP}{Approximate Dynamic Programming}
	\acro{CHP}{Combined Heat and Power plant}
	\acro{COP}{coefficient of performance}
	\acro{DDP}{Dual Dynamic Programming}
	\acro{DP}{Dynamic Programming}
	\acro{ESMP}{Energy Storage Management Problem}
	\acro{GMP}{Generalized Moment Problem}
	\acro{HP}{heat pump}	
    \acro{LMI}{linear matrix inequality}
	\acro{LP}{Linear Programming}
	\acro{MILP}{Mixed-Integer Linear Programming}
	\acro{MINLP}{Mixed-Integer Nonlinear Programming}
	\acro{NLP}{Nonlinear Programming}
	\acro{NP-hard}{Nondeterministic Polynomial time-hard}
	\acro{OPF}{Optimal Power Flow}
	\acro{PCM}{Phase-Change Materials}
	\acro{RES}{Renewable Energy Sources}
	\acro{SDP}{semidefinite program}
	\acro{SOS}{Sum-of-Squares}
\end{acronym}

\end{frontmatter}


\section{Introduction}\label{introduction}
\ac{DDP} \citep{Pereira1991}, also referred to as nested Benders decomposition, is a means of solving multi-stage optimization problems in which constraints on decision variables are coupled only across adjacent stages. The most common application is in a linear, stochastic setting, where it is referred to as Stochastic \acl{DDP} (SDDP). The algorithm relies on a Benders decomposition argument to generate increasingly tight lower bounds on the optimal cost-to-go at each stage. Convergence to optimality of these bounds and of forward state trajectories has been studied in \cite{philpott2008} for the linear case, and \cite{girardeau2015} for the general nonlinear case. Inexact approaches featuring suboptimal cuts and/or forward state trajectories were studied in \cite{zakeri2000} and \cite{guigues2018}, and a number of other extensions have been developed, notably for risk-averse decision making \citep{guigues2012} and multi-stage integer problems \citep{zou2018}. 

In a multi-stage setting, value functions allow single-stage decisions to be taken without explicit consideration of the remainder of the time horizon. This is relevant in many energy applications featuring storage of some kind, where short-term decisions must often be made in the presence of long-term effects driven by slower, for example seasonal, dynamics \citep[see][]{Abgottspon2015,Darivianakis2017}. A locally-tight approximation of the cost-to-go allows relatively efficient trade-offs between short- and long-term costs to be made, even when an exogenous disturbance, or modelling error, may have caused the system state to deviate somewhat from a previously computed trajectory. The value function approximations generated by (S)\ac{DDP} often have this property, and can therefore be well suited to this purpose.

However, a shortcoming common to many nested decomposition approaches, including (S)\ac{DDP}, is that they are only applicable to systems with linear dynamics, costs, and constraints, or with ``benign'' (convex) nonlinearities \citep{girardeau2015}. Many problems to which (S)\ac{DDP} could otherwise be applied feature nonconvex, in particular polynomial, relationships between variables. Examples of polynomial nonlinearities in the energy domain include hydro storage planning with head effects \citep{cerisola2012}, district heating networks \citep{Tang2014}, borehole management using heat pumps \citep{atam2016}, and alternating-current (AC) power system optimization \citep{Taylor2015a}. Although in some cases it is possible to apply a convex approximation, for example McCormick envelopes for bilinear functions \cite{cerisola2012}, this may not offer acceptable modelling accuracy.

For low-dimensional nonlinear systems, it is possible in a very broad range of cases to compute a near-optimal value function by discretizing the state and input spaces and performing the standard \ac{DP} recursion \citep{bertsekas1995dynamic}. This approach has been applied to seasonal borehole storage problems in \cite{DeRidder2011} and \cite{atam2016}, but it becomes impractical for systems with more than only a few states and inputs due to exponential memory and computation requirements. It is therefore desirable to extend the existing theory of \ac{DDP} to handle nonlinear systems, in order to take advantage of \ac{DDP}'s relative scalability.

Other \ac{ADP} \citep{powell2011} approaches address the drawbacks of discretized \ac{DP} by using relaxations of the dynamic programming principle, most commonly in an infinite-horizon setting. Recent approaches such as  \cite{Wang}, \cite{summers2012}, and \cite{Beuchat2017} propose tractable approximations to the \ac{LP} formulation of \ac{ADP} \citep{HernandezLerma1994}, in which the computation of a value function is cast as an (infinite-dimensional) \ac{LP}. The authors of \cite{Savorgnan2009}, \cite{Kamoutsi2017}, and \cite{Lasserre2007} formulate a \ac{GMP} over occupation measures, of which this \ac{LP} formulation is a dual. They derive tractable approximations of the \ac{GMP} and \ac{LP} formulation in the form of moment relaxations and \ac{SOS} programs for approximate control synthesis of polynomial systems. In these approaches, the optimal control problem is solved for a specified initial state distribution. It should also be noted that \acp{GMP} have gained interest recently in the energy domain outside of \ac{DP}, due to their ability to find global solutions of the AC optimal power flow problem \citep{Ghaddar2014,Molzahn2015}.

In this paper, we develop an approach that brings the advantages of the \ac{LP} formulation of \ac{ADP} to \ac{DDP}, in that it handles polynomial costs, dynamics, and constraints, and fits the value function to trajectories emanating from an initial state \emph{distribution}, in contrast to the single initial state used in conventional \ac{DDP}. 
As with conventional DDP, the algorithm performs an iterative sequence of forward simulations and backward recursions. The forward simulation consists of moment problems approximating the occupation measure of candidate trajectories, while the backward recursion is composed of \ac{SOS} programs, dual to the moment problems, that generate under-approximators of the value function. The output of our proposed algorithm is a collection of functions for each stage, the point-wise maximum of which under-approximates the true value function. This yields a richer class of approximations than the Moment/SOS approaches of \cite{Lasserre2007} and \cite{Savorgnan2009} for polynomial dynamical systems, which rely on a single, high-order polynomial to increase accuracy. The methods developed in \cite{ODonoghue2011} and \cite{Beuchat2017} also generate a point-wise maximum under-approximation in an iterative fashion, but do not use the primal side over moments of the occupation measure to refine the approximate value functions.

Specifically, we make the following contributions:

\begin{itemize}
	\item We extend the well-known \ac{DDP} framework to generic polynomial dynamical systems using moment/SOS techniques. We define an algorithm, Moment \ac{DDP}, that generates increasingly tight lower bounds on each stage's value function, and corresponding moments of the state distribution at each stage. This algorithm generates value function estimates that are valid for a \emph{probability distribution} of initial states, encompassing the single initial state (or Dirac distribution) from conventional \ac{DDP} as a special case.
	\item We prove that (i) the upper and lower cost bounds generated by the algorithm converge to at least the optimal cost of a relaxation of the finite-horizon decision problem and at most the optimal cost of the original \ac{GMP}, and (ii) this relaxation becomes tight in the limit as the order of the moment relaxation increases.
	\item We describe the stochastic extension of Moment \ac{DDP}, and give conditions under which the uncertainty can be accommodated within the same framework.
	\item We demonstrate Moment \ac{DDP} numerically with a nonlinear seasonal geothermal borehole dispatch problem based on real measurement data. Furthermore, we report successful application of the algorithm to a higher-dimensional system that is computationally too demanding for conventional discretized \ac{DP}.
\end{itemize}

Section \ref{modelling} states the class of finite-horizon polynomial problems considered in our framework, and presents a finite-horizon discrete-time \ac{SOS} approach to \ac{ADP} inspired by recent optimal control literature. Section \ref{MomentSOS} describes the Moment \ac{DDP} algorithm, and Section \ref{convergence} states and proves its key convergence properties. Section \ref{numerics} presents numerical results for two nonlinear borehole systems of different state dimensions. Section \ref{conclusion} concludes and gives an outlook for future research.
\subsection{Notation and preliminaries} \label{sec:prelim}
The sets $\mathbb{R}$, $\mathbb{N}$ and $\mathbb{N}^+$ denote the real numbers, non-negative and positive integers respectively. For a compact real vector space $\mathbf{S}$, let $\mathcal{M}(\mathbf{S})$ be the set of Borel measures on $\mathbf{S}$ and $\mathcal{C}(\mathbf{S}$) the set of bounded continuous functions on $\mathbf{S}$. Together they form a dual pair ($\mathcal{M}(\mathbf{S})$, $\mathcal{C}(\mathbf{S}$)) with duality brackets $\langle v,\mu\rangle=\int_{\mathbf{S}}v d\mu$ for $v\in\mathcal{C}(\mathbf{S})$. If $v$ is polynomial, we write the duality bracket as an inner product $\langle\mathbf{v},\mathbf{m}\rangle$, where the vector $\mathbf{v}$ contains the coefficients of $v$ and the vector $\mathbf{m}$ the corresponding moments of $\mu$. $\mathcal{M}(\mathbf{S})_+$ denotes the set of positive Borel measures on $\mathbf{S}$. A positive Borel measure $\varphi$ supported on $\mathbf{S}$ with $\varphi(\mathbf{S})=1$ is called a Borel probability measure. A special case of a Borel probability measure is a Dirac measure $\delta_x$ supported on a single point $x\in\mathbf{S}$. The operator $\otimes$ defines the cross product of two probability measures. The expected value with respect to a Borel probability measure $\varphi$ is defined as $\mathbf{E}_{\varphi}(x)=\int_{\mathbf{S}}xd\varphi$. For a Borel set $A$, we define $1_A(x)$ as an indicator function equal to 1 if $x\in A$ and 0 if $x\notin A$. 

Let $\mathbb{R}[x]_k$ be the ring of polynomials of degree at most $k$ in some variable $x \in \mathbb{R}^n$, and let $\rm deg(p)$ denote the degree of $p$. The notation $\Sigma_{2k}[x]$ stands for the Sum-of-Squares polynomials of degree at most $2k$ in $x$.  Polynomial $p(x)\in\Sigma_{2k}[x]$ if and only if there exist polynomials $\xi_1(x),\ldots,\xi_{N_\xi}(x)$ such that $p(x)=\sum_{i=1}^{N_\xi} \xi_i(x)^2$, which implies that $p(x)\geq 0$ for all $x$. This is equivalent to there existing a symmetric, positive semidefinite matrix $\mathbf{P}$ (we denote this $\mathbf{P} \succeq 0$) such that $p(x) \equiv \tilde{p}(x)^\top \mathbf{P} \tilde{p}(x)$. In this definition, $\tilde{p}(x) := (1, x_1, x_2, \ldots, x_1x_2, \ldots, x_n^k)$ is the vector of all possible monomials in $x$, of degree up to $k$. An optimization over the elements of $\mathbf{P}$, with the \ac{LMI} constraint that $\mathbf{P} \succeq 0$, therefore yields parameterizations of \ac{SOS} polynomials as solutions. We refer to the degree of a \ac{SOS} polynomial $p(x)$ as $2k$ since $\textrm{deg}(p)$ is always an even number.

The truncated \emph{quadratic module} of degree $k$, generated by the polynomials $h_i(x)$ of a semi-algebraic set $\mathbf{S}:=\{h_i(x)\geq 0,i=1,\ldots,N_h\}$, is defined as 
\begin{equation}\label{eq:sosdef}
\textrm{Q}_k(\mathbf{S}):=\sigma_0(x)+\sum_{i=1}^{N_h}\sigma_i(x)h_i(x),
\end{equation}
where $\sigma_0\in\Sigma_{2k}[x]$ and $\sigma_i\in\Sigma_{2k}[x]$, with the restriction that $\textrm{deg}(\sigma_i h_i)\leq 2k$. Such polynomials are guaranteed to be non-negative for all $x \in \mathbf{S}$.

\section{Problem statement and background}\label{modelling}
\subsection{Finite horizon problem}
We consider a finite-horizon decision problem of the form \eqref{eq:energystorage}, and the corresponding optimal value $V_0^*(x_0)$ for given $x_0$:
\begin{subequations}\label{eq:energystorage}
	\begin{align}
	V_0^*(x_0):=\min_{\{x_t\}_{t=1}^T, \{u_t\}_{t=0}^{T-1}} \quad &\sum_{t=0}^{T-1} l_t(x_t,u_t) + H(x_T)\label{eq:energystorage_a}\\
	\text{s.t.} \quad &x_{t+1}=f_t(x_t,u_t), \quad t=0,\ldots,T-1,\label{eq:energystorage_b}\\
	&g_{t,j}(x_t,u_t) \geq 0, \quad j=1,\ldots,N_{g,t}, \quad t=0,\ldots,T-1,\label{eq:energystorage_d}\\
    &g_{T,j}(x_T) \geq 0, \quad j=1,\ldots,N_{g,T}.
	\end{align}
\end{subequations}
Vector $x_t \in \mathbb{R}^{n_x}$ represents the state at stage $t$, $u_t \in \mathbb{R}^{n_u}$ is a vector of control inputs (or actions), and $t=0,\ldots,T$ is the time index over a prediction horizon of length $T\in\mathbb{N}^+$. Stage costs are defined by functions $l_t: \mathbb{R}^{n_x} \times \mathbb{R}^{n_u} \rightarrow\mathbb{R}$ and the terminal cost function is $H:\mathbb{R}^{n_x}\rightarrow \mathbb{R}$. The dynamics are modelled by the function $f_t(x_t,u_t): \mathbb{R}^{n_x}\times \mathbb{R}^{n_u} \rightarrow\mathbb{R}^{n_x}$, and the constraint functions $g_{t,j}(x_t,u_t): \mathbb{R}^{n_x}\times \mathbb{R}^{n_u} \rightarrow\mathbb{R}$ encode conservation laws and technical bounds on variables at each stage. 

For later developments, we will assume that $x_t$ includes an auxiliary state $x_{c,t}$ on the interval $[0,T]$ with update equation $x_{c,t+1}=x_{c,t}+1$, thus representing the current time step $t$ as a state.

With a minor abuse of notation, we say that constraints (\ref{eq:energystorage_d}) that are uncoupled from $u_t$ define the state space $\mathbf{X}_t:=\{x_t\in\mathbb{R}^{n_x}: g_{t,j}(x_t)\geq 0, j=1,\ldots,N_{g_x,t};x_{c,t}=t\}$. Constraints (\ref{eq:energystorage_d}) that are uncoupled from $x_t$ define the action space $\mathbf{U}_t:=\{u_t\in\mathbb{R}^{n_u}: g_{t,j}(u_t)\geq 0, j=N_{g_x,t}+1,\ldots,N_{g_u,t}\}$. The feasible set of state and control decisions at time step $t$ is defined as
\begin{equation*}
\mathbf{C}_t:=\{(x_t,u_t)\in\mathbb{R}^{n_x}\times\mathbb{R}^{n_u}: g_{t,j}(x_t,u_t)\geq 0, j=1,\ldots,N_{g,t};x_{c,t}=t\}.
\end{equation*}
For any $x_t \in \mathbf{X}_t$, the set of admissible controls is defined as $\mathbf{U}_t(x_t):=\{u_t: (x_t,u_t)\in\mathbf{C}_t\}$. Since the sets $\mathbf{C}_t$ and $\mathbf{X}_t$ contain a constraint $x_{c,t}=t$, and all problem constraints will be defined for states and inputs belonging to these time-indexed sets, we will drop the time subscripts from $x$ and $u$ to maintain clean notation, without loss of clarity. We will also refer to $x_{c,t}$ as $x_c$ under the same rationale.

Furthermore, we make the following assumptions:
\begin{assump}\label{as:polynomialcompact}
	Functions $l_t(x,u)$, $f_t(x,u)$, $g_{t,j}(x,u)$ are polynomials for all $t \in \{0,\ldots,T-1\}$, as is $H(x)$. The state and control decisions are bounded i.e., $\mathbf{C}_t$ and $\mathbf{X}_t$ are compact.
\end{assump}
\begin{assump}\label{as:invariance}
	For all $t=0,\ldots, T-1$, for all $x \in \mathbf{X}_t$ there exists at least one $u \in \mathbf{U}_t(x)$ such that $f_t(x,u) \in \mathbf{X}_{t+1}$.
\end{assump}

The \emph{value function} $V_t^*:\mathbf{X}_t\rightarrow \mathbb{R}$ represents the sum of all costs incurred in problem \eqref{eq:energystorage} starting from state $x_t$ at time instance $t$, if optimal control decisions are taken at all times from $t$ to $T-1$. It is defined recursively by the well-known Bellman optimality condition at each stage $t=0,\ldots,T-1$:
\begin{equation}\label{eq:bellman}
V_t^*(x):=\min_{u\in\mathbf{U}_t(x)} \left\{ l_t(x,u)+V_{t+1}^*(f_t(x,u))\right\},\quad \forall x\in\mathbf{X}_t,
\end{equation}
with the boundary condition $V_T^*(x) = H(x)$ for all $x\in\mathbf{X}_T$.

\subsection{Generalized moment problem}\label{gmp}
We now develop a finite-horizon discrete-time optimal control problem in the form of a \ac{GMP} \citep{Lasserre2014}. Our formulation, an infinite-dimensional linear program over occupation measures, is a finite-horizon problem related to the \ac{GMP} developed in \cite{Savorgnan2009}. An occupation measure can be interpreted as a probability distribution describing the trajectory $x$ and $u$ of a dynamical system starting from a known initial state distribution. 


Consider the (nonstationary) Markov control model formed by the tuple $(\mathbf{X}_t,$$\allowbreak\mathbf{U}_t,$$\allowbreak\{\mathbf{U}(x)_t|x\in\mathbf{X}_t\},\allowbreak f_t(x,u),\allowbreak l_t(x,u),H(x))$ for which we wish to find an optimal control policy $\varrho^*$. Note that for the purposes of the derivations which follow, nonstationary Markov control models can be represented using an equivalent stationary model using state augmentation \citep[Section 1.3]{Hernandez1989}. Under Assumption \ref{as:invariance}, from \citep[Theorem 3.2.1]{HernandezLerma1970} there exists an optimal policy $\varrho$ that is deterministic and can therefore be expressed in the form $u=\varrho^*(x)$. The state-action occupation measure at time step $t$ for a given policy $\varrho$ and initial state measure $\nu_0$ is a Borel measure $\mu_t \in \mathcal{M}(\mathbf{C}_t)_+$ on the feasible set $\mathbf{C}_t$, defined by
\begin{equation}
\mu_t(B):= \mathbf{E}^{\varrho}_{\nu_0}(1_B(x,u))
\end{equation}
for all Borel sets $B$ of $\mathbf{C}_t$. $\mathbf{E}^{\varrho}_{\nu_0}$ is the expected value under policy $\varrho$ given some initial distribution $\nu_0$ of the state. Measure $\mu_t$ contains all information about the relationship between the state $x$ and control input $u$ (which depends on $x$) at time step $t$.

Let $\pi: \mathcal{M}(\mathbf{C}_t)_+ \rightarrow \mathcal{M}(\mathbf{X}_{t})_+$ be the projection from state-action space onto the state alone.\footnote{For any Borel measure $\mu_t \in \mathcal{M}(\mathbf{C}_t)_+$ this is formally defined by ($\pi\mu_t$)($\mathit{B}$) = $\mu_t$(($\mathbb{R}^{n_u}\times \mathit{B})~\cap\enskip \mathbf{C}_t)$ for all Borel subsets $B$ of $\mathbf{X}_t$.} Then the linear operator $\mathcal{L}_t:\mathcal{M}(\mathbf{C}_t)_+ \rightarrow \mathcal{M}(\mathbf{X}_{t+1})_+$ maps the state-action occupation measure at time step $t$ to the occupation measure projected onto the state space $\mathbf{X}_{t+1}$ at time step $t+1$ under the dynamics $f_t(x,u)$:
\begin{equation}\label{eq:foias}
\pi\mu_{t+1}(A)=\mathcal{L}_t\mu_{t}(A)=\int_{\mathbf{C}_t} 1_A(f_t(x,u))d\mu_t
\end{equation}
for all Borel sets $A$ of $\mathbf{X}_{t+1}$.\footnote{This operator was first defined in \cite{lasota1994}, and used for the infinite-horizon control application in \cite{Savorgnan2009}.} In words, the probability mass of the state distribution in set $A$ at time $t+1$ is equal to the total contributions of mass brought into $A$ by the dynamics, across all infinitesimal elements of the state-action distribution $\mu_t$. This operator therefore encodes consistency with the dynamics of successive state-action distributions ($\mu_t,\mu_{t+1}$).


Using these definitions, the following linear constraint describes all state-action probability measures $\mu_0,\mu_1,\ldots,\mu_{T-1}$ that are consistent with a control policy $\varrho$, the dynamics $f_t(x,u)$, and a free choice of terminal state measure $(\nu_T \otimes \delta_T) \in \mathcal{M}(\mathbf{X}_T)_+$:
\begin{equation} \label{eq:measure_consistency}
\nu_0\otimes\delta_0+\sum_{t=0}^{T-1}\mathcal{L}_t\mu_t=\sum_{t=0}^{T-1}\pi\mu_t+\nu_{T}\otimes\delta_T\, .
\end{equation}
We use $\nu_t$ to denote a probability measure over all elements of vector $x$ except the auxiliary time index state $x_c$, and $\delta_t$ to denote the Dirac measure supported on $t$ for $x_c$. Thus, measure $\nu_0\otimes\delta_0$ is an initial probability distribution on $\mathbf{X}_0$, where $\delta_0$ accounts for $x_c$ being supported on $t=0$. Similarly, $\nu_{T}\otimes\delta_T$ is the terminal Borel probability measure on $\mathbf{X}_T$. Note that the sum of measures on each side of \eqref{eq:measure_consistency} is supported on $x_c = 0, 1, \ldots, T$, thus the single constraint encodes all $T$-step trajectories of the system.      

We can now formulate the \ac{GMP} \eqref{eq:gmp}, which is a $T$-step decision problem related to \eqref{eq:energystorage}. Measures $\mu_t$ and the terminal state measure $\nu_T$ fully specify the solution of \eqref{eq:energystorage} for a given distribution $\nu_0$ of the initial state $x_0$.
\begin{subequations}\label{eq:gmp}
	\begin{align}
	\rho^* :=\min_{\{\mu_t\}_{t=0}^{T-1},\,\nu_{T}} \quad &\sum_{t=0}^{T-1}\int_{\mathbf{C}_t} l_t(x,u)d\mu_t+\int_{\mathbf{X}_T} H(x) d(\nu_{T}\otimes\delta_T) \label{eq:gmp_obj}\\
	\text{s.t.}\quad &\nu_0\otimes\delta_0+\sum_{t=0}^{T-1}\mathcal{L}_t\mu_t=\sum_{t=0}^{T-1}\pi\mu_t+\nu_{T}\otimes\delta_T, \label{eq:gmp_dyn}\\
	&\mu_t \in \mathcal{M}(\mathbf{C}_t)_+,\,\, \nu_{T}\otimes\delta_T \in \mathcal{M}(\mathbf{X}_T)_+. \label{eq:gmp_cone}
	\end{align}
\end{subequations}
\begin{theorem}
	The optimal value $\rho^*$ of \eqref{eq:gmp} is equal to the optimal cost $V_0^*(x_0)$ of (\ref{eq:energystorage}) when $\nu_0$ is a Dirac measure on $x_0$, and equal to the expected value $\mathbf{E}_{\nu_0}(V_0^*(x_0))$ when $\nu_0$ is a probability measure.
\end{theorem}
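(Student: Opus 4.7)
The plan is to reduce the two assertions to a single statement by observing that $V_0^*(x_0) = \int V_0^*(x)\,d\delta_{x_0}(x)$, so it suffices to prove $\rho^* = \mathbf{E}_{\nu_0}(V_0^*(x_0))$ for an arbitrary Borel probability measure $\nu_0$, and then establish the two inequalities $\rho^* \leq \mathbf{E}_{\nu_0}(V_0^*(x_0))$ and $\rho^* \geq \mathbf{E}_{\nu_0}(V_0^*(x_0))$.

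For the upper bound, I would construct an explicit feasible solution whose objective attains $\mathbf{E}_{\nu_0}(V_0^*(x_0))$. By Assumption \ref{as:invariance} and the Hernandez-Lerma result cited in the text, there exists an optimal deterministic policy that, after augmentation by $x_c$, takes the stagewise form $u = \varrho^*_t(x)$. Starting from $\sigma_0 := \nu_0 \otimes \delta_0$, define $\mu_t^*$ as the image measure of $\sigma_t$ under the map $x \mapsto (x, \varrho^*_t(x))$, and set $\sigma_{t+1} := \mathcal{L}_t \mu_t^*$. By construction $\pi\mu_t^* = \sigma_t$, so $\sum_t \mathcal{L}_t\mu_t^* - \sum_t \pi\mu_t^*$ telescopes to $\sigma_T - \sigma_0$, which is exactly \eqref{eq:gmp_dyn} with $\nu_T \otimes \delta_T := \sigma_T$. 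Unrolling \eqref{eq:bellman} from $t=T-1$ backward and using the pushforward identity $\int V_{t+1}^*\,d(\mathcal{L}_t\mu_t^*) = \int V_{t+1}^*\,d\sigma_{t+1}$ at each step then shows that \eqref{eq:gmp_obj} evaluated at this tuple equals $\int V_0^*(x)\,d\nu_0(x)$.

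For the lower bound, take any feasible $(\{\mu_t\},\nu_T)$. The auxiliary state $x_c$ localizes the supports of the terms in \eqref{eq:gmp_dyn}: $\nu_0\otimes\delta_0$ lives on $\{x_c=0\}$, $\pi\mu_t$ and $\mathcal{L}_{t-1}\mu_{t-1}$ on $\{x_c=t\}$, and $\nu_T\otimes\delta_T$ on $\{x_c=T\}$. Slicing \eqref{eq:gmp_dyn} by $x_c$ therefore yields the stagewise flow identities $\pi\mu_0 = \nu_0\otimes\delta_0$, $\pi\mu_t = \mathcal{L}_{t-1}\mu_{t-1}$ for $1\leq t\leq T-1$, and $\nu_T\otimes\delta_T = \mathcal{L}_{T-1}\mu_{T-1}$. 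Compactness of $\mathbf{C}_t$ from Assumption \ref{as:polynomialcompact} permits the disintegration $d\mu_t(x,u) = d\varrho_t(u\mid x)\,d(\pi\mu_t)(x)$ for some stochastic kernel $\varrho_t$. A backward induction on $t$, invoking at each stage the pointwise inequality $\int\bigl(l_t(x,u)+V_{t+1}^*(f_t(x,u))\bigr)\,d\varrho_t(u\mid x) \geq V_t^*(x)$ that follows immediately from \eqref{eq:bellman}, together with the pushforward identity $\int V_{t+1}^*\,d(\mathcal{L}_t\mu_t) = \int V_{t+1}^*\,d(\pi\mu_{t+1})$, shows that \eqref{eq:gmp_obj} is bounded below by $\int V_0^*(x)\,d(\pi\mu_0)(x) = \mathbf{E}_{\nu_0}(V_0^*(x))$.

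The main obstacle I anticipate is the bookkeeping in the lower-bound step: justifying that the time-slicing of \eqref{eq:gmp_dyn} cleanly yields the stagewise flow identities, checking Borel measurability of $V_t^*$ so that the disintegration and pushforward arguments are valid (which follows from compactness and continuity of the polynomial data together with standard measurable-selection results), and verifying that the per-stage Bellman inequality can be chained through the projected flow constraints without incurring any slack. Once these points are settled, the telescoping through the horizon yields $\mathbf{E}_{\nu_0}(V_0^*(x_0))$ as both an upper and lower bound on $\rho^*$, and specializing to $\nu_0 = \delta_{x_0}$ gives the Dirac statement of the theorem.
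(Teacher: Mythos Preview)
Your proposal is correct and provides a direct, self-contained proof of the equivalence. The paper takes a different route: rather than establishing the two inequalities from scratch, it observes that the finite-horizon problem \eqref{eq:energystorage}, once augmented with the time-index state $x_c$, is a special case of the infinite-horizon \ac{GMP} treated in \cite{Savorgnan2009} and \cite{HernandezLerma1970} (by setting stage costs to zero for $t > T$ and noting that the supports of the $\mu_t$ are confined to $x_c \in [0,T-1]$), and then invokes \cite[Theorem~6.3.7]{HernandezLerma1970} directly to conclude $\rho^* = \mathbf{E}_{\nu_0}(V_0^*(x_0))$. Your approach has the advantage of being self-contained and of making explicit the two ingredients that the cited theorem packages together: the time-slicing of \eqref{eq:gmp_dyn} via $x_c$ into stagewise flow identities, and the disintegration of each $\mu_t$ into a stochastic kernel against which the pointwise Bellman inequality can be integrated. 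The paper's approach is considerably shorter but defers all of the substantive measure-theoretic work (measurable selection, disintegration, the telescoping argument) to the external reference.
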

\begin{proof}
	The finite-horizon problem (\ref{eq:gmp}), expressed as an equivalent stationary model \citep[Section 1.3]{Hernandez1989}, is a special case of the infinite horizon \ac{GMP} from \cite{HernandezLerma1970} and \cite{Savorgnan2009}. Problem (\ref{eq:energystorage}) can be restated as an infinite-horizon problem by setting the cost functions for $t>T$ to zero. Since the support of any $\mu_t$ is limited to values of auxiliary state $x_c$ on the interval $[0,T-1]$, by definition of the measure $\mu_t$, we have $\sum_{t=T+1}^{\infty}\pi{\mu}_t=0$ and $\sum_{t=T}^{\infty}\mathcal{L}_t{\mu}_t=0$. Thus, the infinite-horizon \ac{GMP} presented in \cite{Savorgnan2009} reduces to (\ref{eq:gmp}).
	Due to Assumption \ref{as:polynomialcompact} (which implies continuity of $l_t(x,u)$ and $f_t(x,u)$, and compactness of $\mathbf{C}_t$ and $\mathbf{X}_t$), we have $\rho^*=\mathbf{E}_{\nu_0}(V_0^*(x_0))$ by \citep[Theorem 6.3.7]{HernandezLerma1970}.
\end{proof}  

\subsection{Value function approximation}\label{dual}
To facilitate the decomposition approach in Section \ref{MomentSOS}, we  rewrite \eqref{eq:gmp} by introducing state measures $\nu_{t}\otimes\delta_t \in \mathcal{M}(\mathbf{X}_t)_+$ for $t=1,\ldots,T-1$, and replacing the single dynamical constraint \eqref{eq:gmp_dyn} with $T$ separate one-step constraints,
\begin{equation}\label{eq:gmp_dyn_alternative}
	\nu_t\otimes\delta_t+\mathcal{L}_t\mu_t=\pi\mu_t+	\nu_{t+1}\otimes\delta_{t+1},\quad t=0,\ldots,T-1.
\end{equation}
The resulting \ac{GMP} is equivalent to \eqref{eq:gmp}, since eliminating the measures $\nu_{t}\otimes\delta_t \in \mathcal{M}(\mathbf{X}_t)_+$ using equalities \eqref{eq:gmp_dyn_alternative} recovers constraint \eqref{eq:gmp_dyn}. We now state the dual of this equivalent \ac{GMP}, and show that the component of its solution for $t=0$ approximates the value function $V_0^*(x)$ of (\ref{eq:bellman}) over the initial distribution $\nu_0$.
Following the dualization process of \cite{anderson1987} for infinite-dimensional linear programs, we obtain \eqref{eq:dual}. This is another infinite-dimensional linear program, in this case in the space of bounded continuous functions on $\mathbf{X}_t$ for each time step $t$, denoted $\mathcal{C}(\mathbf{X}_t)$.
\begin{subequations}\label{eq:dual}
	\begin{align}
	\theta^* :=\max_{\{V_t\in \mathcal{C}(\mathbf{X}_t)\}_{t=0}^{T-1}} &\int_{\mathbf{X}_0} V_0(x)d(\nu_0\otimes\delta_0)\\
	\text{s.t.}\quad &l_t(x,u)-V_t(x)+V_t(f_t(x,u)) \geq 0,\quad\forall(x,u)\in\mathbf{C}_t,\quad t=0,\ldots,T-1, \label{eq:positivity_S}\\
	&V_{t+1}(x)\geq V_t(x),\quad\forall x \in\mathbf{X}_{t+1},\quad t=0,\ldots,T-2\label{eq:positivity_XN}, \\
    &H(x)\geq V_{T-1}(x), \quad\forall x \in\mathbf{X}_{T}. \label{eq:positivity_XNT}
	\end{align}
\end{subequations}
The integral $d(\nu_0 \otimes \delta_0)$ reflects the initial state distribution $\nu_0$ and initial value of the auxiliary state $x_c$, which is always 0. Thus the objective integrates $V_0(x)$ over a ``slice'' of $x$-space at $x_c = 0$. 

Note that each function $V_t(x)$ in \eqref{eq:dual} is constrained at time steps $t$ \emph{and} $t+1$, and that $V_0(x), \ldots, V_{T-1}(x), H(x)$ form a chain of coupled functions. Constraint \eqref{eq:positivity_S} is a relaxation of the Bellman optimality condition for each pair of points $(x, f(x,u))$ generated by an $(x,u)$ pair in $\mathbf{C}_t$; since $x$ and $f(x,u)$ have time index states $x_c = t$ and $x_{c} = t+1$ respectively, $V_t(x)$ is constrained in how it changes between time steps $t$ and $t+1$. Constraint \eqref{eq:positivity_XN} upper-bounds $V_t(x)$ by the value of the ``next'' value function $V_{t+1}(x)$, on $x$ values with time index $x_c = t+1$.

Since we have shown that the finite-horizon case is just a special case of the infinite-horizon formulation and Assumption \ref{as:polynomialcompact} holds, Problem \eqref{eq:dual} is in fact the \ac{LP} formulation of the dynamic programming problem for \eqref{eq:energystorage} and there is no duality gap between (\ref{eq:gmp}) and (\ref{eq:dual}) \citep[Theorem 6.3.8]{HernandezLerma1970}. It is straightforward to show\footnote{The optimal solutions $\hat{V}_t(x)$ of (\ref{eq:dual}) are \emph{subsolutions} of the Bellman equation (\ref{eq:bellman}), i.e.~$\hat{V}_t(x) \leq l_t(x,u)+\hat{V}_t(f_t(x,u))$ on $\mathbf{C}_t$ and $\hat{V}_{t}(x)\leq \hat{V}_{t+1}(x)$ on $\mathbf{X}_{t+1}$, with $\hat{V}_{T-1}(x)\leq H(x)$ on $\mathbf{X}_{T}$. As pointed out in \cite{Savorgnan2009}, this leads to the fact that $\hat{V}_0(x)$, a maximizer, minimizes the quantity $\int_{\mathbf{X}_0} |V_0^*(x)-\hat{V}_0(x)|d(\nu_0\otimes\delta_0)=\int_{\mathbf{X}_0}V_0^*(x)-\hat{V}_0(x)d(\nu_0\otimes\delta_0)$.} that for all feasible solutions of \eqref{eq:dual}, $V_t(x) \leq V_t^*(x)$ on $\mathbf{X}_{t}$ for $t=0,\ldots,T-1$.

\section{Moment DDP}\label{MomentSOS}
We now present an algorithm, termed Moment \ac{DDP}, to find approximate solutions to (\ref{eq:energystorage}) that are fitted to a probability distribution $\nu_0$ of values of $x_0$. This is achieved by decomposing the multi-stage problems \eqref{eq:gmp} and \eqref{eq:dual} into single stages and solving finite approximations of these problems. We first describe the backward recursion (Section \ref{backward}) and forward simulation (Section \ref{forward}), which are familiar concepts from existing \ac{DDP} approaches, and then state the Moment DDP algorithm as a whole in Section \ref{algorithm}.

Moment \ac{DDP} uses the same stage-wise decomposition principle as conventional \ac{DDP}, in that it simulates state trajectories in the forward simulation and then solves dual problems to generate lower-bounding functions in the backward recursion. However it is different in two important respects. First, the forward simulation consists of a sequence of single-stage problems over \emph{moments} of the occupation measure instead of the point values or sampled uncertainty realizations used in conventional (S)DDP. These moments are a finite approximation of the original problem \eqref{eq:gmp} over occupation measures. Second, the backward recursion, comprising dual \ac{SOS} problems, generates \emph{polynomial} rather than linear cuts, and under-approximates the value function most closely around the state distribution computed by the forward simulation. Analogously to conventional \ac{DDP}, the cuts are used in the forward simulation as approximate cost-to-go functions to improve the candidate state trajectory. The sum of costs in the forward simulation (as estimated from the truncated moment series) represents an upper bound on the optimal cost attainable under the moment/SOS approximation, while the expected value (with respect to the given initial state distribution $\nu_0$) of the value function obtained for $t=0$ represents a lower bound. The difference between the upper and lower bounds is used as a convergence criterion for terminating the algorithm.

Alongside our general description of Moment \ac{DDP}, we will use problem \eqref{eq:gmp} with horizon $T=2$ to illustrate the decomposition into single-stage problems. The proof of convergence in Section \ref{convergence} will also apply to the two-stage problem, with an induction argument used to extend this to arbitrary $T$.
\subsection{The backward recursion}\label{backward}
The backward recursion creates a new polynomial lower bounding function $V_{t,z}(x)$ for the value function for $t = T-1,\ldots,0$, analogous to the Benders cuts in conventional \ac{DDP}. For each time step $t$ and iteration $z$, the single-stage subproblem uses the following data:
\begin{itemize}
\item The lower-bounding functions already generated from earlier  backward recursions (including the current one), $V_{t+1,i}(x)$, $i=0,\ldots,z$, satisfying $V_{t+1,i}(x) \leq V_{t+1}^*(x)$ for all $x \in \mathbf{X}_{t+1}$.
\item The state measure $\nu_t\otimes\delta_t\in\mathcal{M}(\mathbf{X}_t)$ from the last forward pass completed.
\end{itemize}
By the standard dynamic programming argument used in conventional \ac{DDP}, the subproblem corresponds to the first stage of a version of problem \eqref{dual} starting at step $t$:
 \begin{subequations}\label{eq:dual_1t}
	\begin{align}
	\theta_t:=\max_{V_{t,z}\in \mathcal{C}(\mathbf{X}_t)} &\int_{\mathbf{X}_t} V_{t,z}(x)d(\nu_t\otimes\delta_t) \label{eq:dual_1t_obj}\\
	\text{s.t.}\quad &l_t(x,u)-V_{t,z}(x)+V_{t,z}(f_t(x,u)) \geq 0,\quad\forall(x,u)\in\mathbf{C}_t,\label{eq:bwd_bellman}\\
	&V_{t,z}(x) \leq \left\{ \begin{array}{ll}\max\big\{V_{t+1,0}(x),\ldots, V_{t+1,z}(x)\}, \,\,  \forall x\in\mathbf{X}_{t+1}, & \text{if $t\in\{0,\ldots,T-2\}$} , \\
    H(x), \,\,  \forall x \in \mathbf{X}_{t+1}, & \text{if $t=T-1$.} \end{array}\right.\label{eq:bwd_bc}
	\end{align}
\end{subequations}
This problem is illustrated in Fig.~\ref{fig:backward_rec}. Constraint \eqref{eq:bwd_bellman} restricts the change in the value function from time step $t$ to time step $t+1$ according to the Bellman principle, and \eqref{eq:bwd_bc} upper-bounds the value function at time step $t+1$ by the lower bounds already derived for stage $t+1$ of the problem.
\begin{figure}
	\centering
	\includegraphics[trim={0 0cm 0 0},clip,scale=0.7]{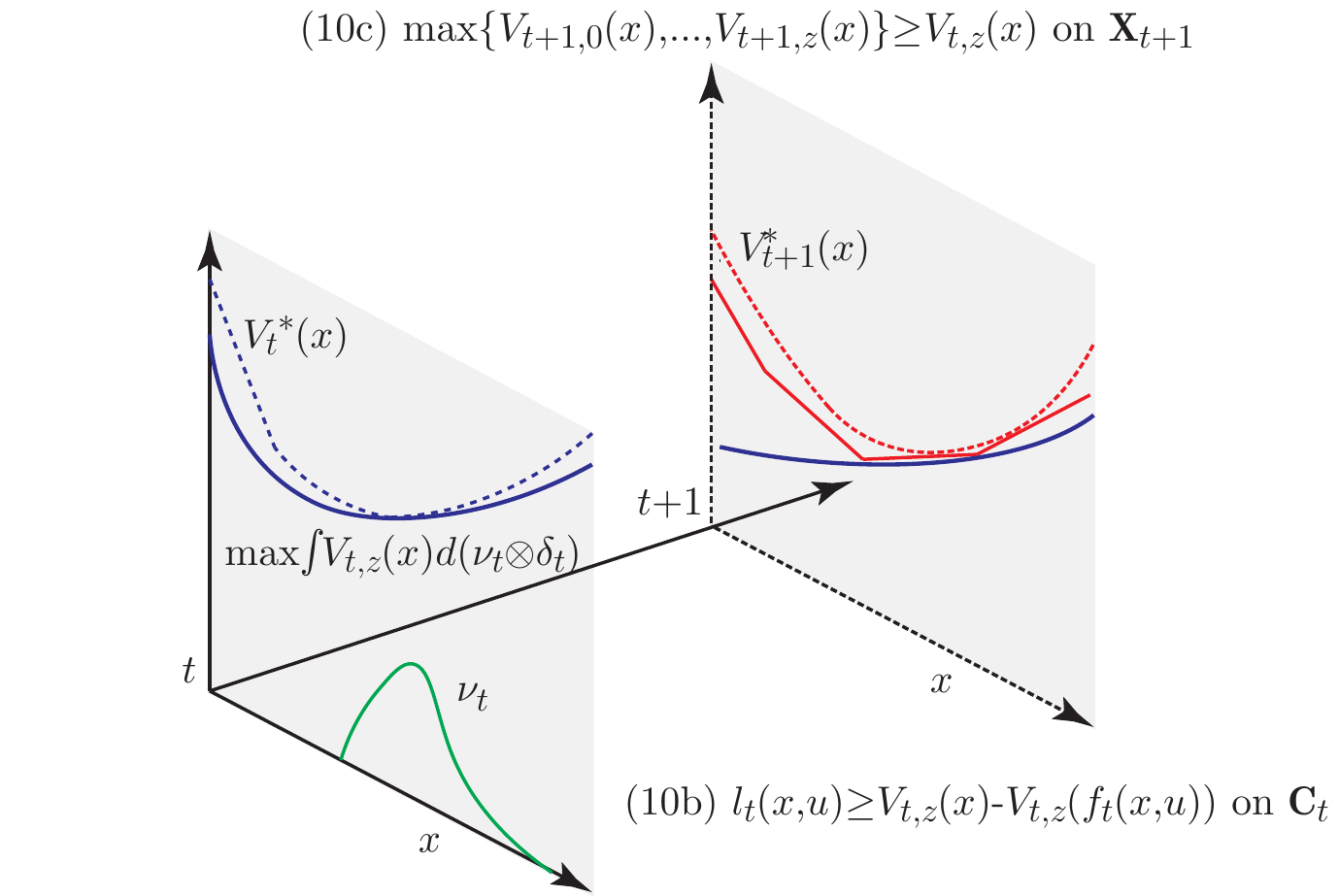}
	\caption[Illustration of the backward recursion]{Illustration of the infinite-dimensional \ac{LP} \eqref{eq:dual_1t}. The function $V_{t,z}(x)$ (blue) is maximized over the state distribution $\nu_t$ (green) at time step $t$ subject to constraints \eqref{eq:bwd_bellman} and \eqref{eq:bwd_bc}, in order to approximate the value function $V^*_t(x)$ (dashed blue). Constraint \eqref{eq:bwd_bellman} ensures $V_{t,z}(x) \leq V_t^*(x)$ by limiting the values of $V_{t,z}(x)$ at time step $t$ such that transitions to step $t+1$ incur costs that respect the Bellman inequality condition. Constraint \eqref{eq:bwd_bc} bounds $V_{t,z}(x)$ from above at $t+1$ by the point-wise maximum (red) of lower-bounding functions computed in previous iterations for time step $t+1$. These are in turn under-approximations of the optimal value function (dashed red) at $t+1$.}
\label{fig:backward_rec}
\end{figure}

Problem \eqref{eq:dual_1t} is intractable owing to its infinite-dimensional decision space, but can be approximated using a polynomial parameterization of $V_{t,z}(x)$. We note that, except for the case $t=T-1$, constraint \eqref{eq:bwd_bc} is equivalent to
\begin{equation*}
V_{t,z}(x) \leq y, \quad \forall(x,y) \in (\mathbf{X}_{t+1}\times \mathbb{R}) \cap \{(x,y) \,:\, y \geq V_{t+1,0}(x),\ldots, y \geq V_{t+1,z}(x)\} \, ;
\end{equation*}
this leads to the following \ac{SOS} program for each time step $t=T-1,\ldots,0$:
\begin{subequations}\label{eq:sos_backward}
	\begin{align}
\theta_{t,z}:=\max_{\mathbf{V}_{t,z},\boldsymbol\sigma_{t,z}}\enskip&\langle\mathbf{V}_{t,z}, \mathbf{q}_{t,z}\rangle\\
	\text{s.t.}\quad &l_t(x,u)-V_{t,z}(x)+V_{t,z}(f_t(x,u)) = \textrm{Q}_k(\mathbf{C}_t)\label{eq:putinar_S_backward},\\
	&y-V_{t,z}(x)=\textrm{Q}_k(\mathbf{Y}_{t+1,z}),\label{eq:putinar_XN_backward}\\
	& \textrm{deg}(V_{t,z}) \kappa_t \leq 2k\label{eq:sos_deg_limit}.
	\end{align}
\end{subequations}
The polynomial $V_{t,z}(x)$ is represented by its vector of monomial coefficients $\mathbf{V}_{t,z}$, and the objective \eqref{eq:dual_1t_obj} can thus be expressed as $\langle \mathbf{V}_{t,z}, \mathbf{q}_{t,z}\rangle$, where $\mathbf{q}_{t,z}$ is a vector of moments of the state distribution $\nu_t \otimes \delta_t$ returned at step $t-1$ of the last forward pass completed.\footnote{In our proposed implementation, the first backward pass takes place before the first forward pass, hence the moments $\mathbf{q}_{t,0}$ of the state trajectory must be initialized. The uniform distribution may be an appropriate choice when no information about the optimal state trajectory is available \textit{a priori}.} The constraints \eqref{eq:putinar_S_backward}-\eqref{eq:putinar_XN_backward} convert \eqref{eq:bwd_bc}-\eqref{eq:bwd_bellman} into equality constraints using Putinar's Positivstellensatz \citep{Putinar} for compact semi-algebraic sets, in which the slacks are written as quadratic modules $\textrm{Q}_k(\mathbf{C}_t)$ and $\textrm{Q}_k(\mathbf{Y}_{t+1,z})$ that are non-negative by construction; see definition \eqref{eq:sosdef}.
The vector $\boldsymbol\sigma_{t,z}$ contains all coefficients of the \ac{SOS} polynomials introduced by the quadratic modules and is subject to additional \ac{LMI} constraints not shown explicitly here, ensuring that the coefficients form valid \ac{SOS} polynomials.\footnote{More precisely, $\boldsymbol\sigma_{t,z}$ is a concatenation of the vectorizations of the matrix of coefficients $\mathbf{P}$, as described in Section \ref{sec:prelim}, for all of the \ac{SOS} polynomials $\sigma_i$ within the quadratic modules $\textrm{Q}_k(\mathbf{C}_t)$ and $\textrm{Q}_k(\mathbf{Y}_{t+1,z})$.} Constraints \eqref{eq:putinar_S_backward} and \eqref{eq:putinar_XN_backward} are implemented by matching the coefficients of each monomial on either side, i.e., using linear equality constraints linking the elements of $\boldsymbol V_{t,z}$  and $\boldsymbol\sigma_{t,z}$. Since the definition of the quadratic module limits the degree of polynomial used to $2k$, and polynomials $V_t$ are composed with polynomials $f_t(x,u)$ in \eqref{eq:putinar_S_backward}, the degree of $V_t$ must be restricted by \eqref{eq:sos_deg_limit}, where $\kappa_t:=\max_{i=1,\ldots,n_x}(\textrm{\textrm{deg}}\, (f_{t,i}(x,u)))$ is the highest-order polynomial found in the dynamics. 

In constraint \eqref{eq:putinar_XN_backward} we introduced a new epigraph set $\mathbf{Y}_{t+1,z}$. For each time step $t=T,\ldots,1$, $\mathbf{Y}_{t,z}$ is defined by the $z$ lower-bounding functions generated so far for that time step, and an upper bound $\overline{y}$ on the epigraph variable $y$:
\begin{equation*}
\mathbf{Y}_{t,z}:=\left\{ \begin{array}{ll} \{(x,y):x \in \mathbf{X}_t; y\in\mathbb{R}; y \leq \overline{y};y\geq  V_{t,i}(x),\,i=0,\ldots,z\}, \, &t=1,\ldots,T-1, \\ \{(x,y):x \in \mathbf{X}_t; y\in\mathbb{R}; y \leq \overline{y}; y\geq  H(x)\}, \, & t=T. \end{array} \right.
\end{equation*}
The parameter $\overline{y}\in\mathbb{R}$ must be chosen in advance and ensures that, in combination with at least one lower-bounding value function, the epigraph set is compact.\footnote{We acknowledge that this is not an epigraph in the strict sense of the word, since it includes an upper bound on $y$. The value of $\overline{y}$ used to define $\mathbf{Y}_t$ must be larger than the greatest sum of costs from time steps $t$ to $T$ that can occur in any state trajectory. Since the state-input set is compact, the stage cost is bounded, and the number of stages is finite, it is generally straightforward to obtain such a bound.} 

Since the function parameterization in \eqref{eq:sos_backward} is contained in the feasible set of \eqref{eq:dual_1t}, it follows that $\theta_{t,z}$ is upper bounded by the optimal value of \eqref{eq:dual_1t}. The approximation accuracy is known to improve as $k$ increases \citep{korda2016}.

Returning to the two-stage example, the backward recursion at iteration $z$ for $t=1$ is a \ac{SOS} problem of type (\ref{eq:sos_backward}):
\begin{subequations}\label{eq:sos_backward_2t}
	\begin{align}
	\theta_{1,z}= \max_{\mathbf{V}_{1,z},\boldsymbol\sigma_{1,z}}\enskip& \langle\mathbf{V}_{1,z}, \mathbf{q}_{1,z}\rangle\\
	\text{s.t.}\quad &l_1(x,u)-V_{1,z}(x)+V_{1,z}(f_1(x,u)) =\textrm{Q}_k(\mathbf{C}_1)\label{eq:putinar_S_backward_2t},\\
	&H(x)-V_{1,z}(x) =\textrm{Q}_k(\mathbf{X}_2), \label{eq:putinar_XN_backward_2t}\\
	&\textrm{deg}(V_{1,z}) \kappa_1\leq 2k,\label{eq:deglim_t2}
	\end{align}
\end{subequations}
We add the optimal solution $\hat{V}_{1,z}$ of (\ref{eq:sos_backward_2t}) to the epigraph set $\mathbf{Y}_{1,z}$ and solve a \ac{SOS} problem for $t=0$:
\begin{subequations}\label{eq:sos_backward_1t}
	\begin{align}
	\theta_{0,z} = \max_{\mathbf{V}_{0,z},\boldsymbol\sigma_{0,z}}\enskip & \langle\mathbf{V}_{0,z}, \mathbf{q}_{0,z}\rangle\\
	\text{s.t.}\quad &l_0(x,u)-V_{0,z}(x)+V_{0,z}(f_0(x,u)) =\textrm{Q}_k(\mathbf{C}_0)\label{eq:putinar_S_backward_1t},\\
	&y-V_{0,z}(x)=\textrm{Q}_k(\mathbf{Y}_{1,z}),\\
	&\textrm{deg}(V_{0,z}) \kappa_0 \leq 2k\label{eq:deglim_t1}.
	\end{align}
\end{subequations}
In the Moment DDP algorithm described in Section \ref{algorithm}, the lower bound value $\theta_{LB,z}=\theta_{0,z}$ is used in the termination criterion.


\subsection{The forward simulation}\label{forward}
The forward simulation finds, for each $t=1,\ldots,T$, an approximate solution to a single stage of the \ac{GMP} \eqref{eq:gmp}, in which the state occupation measure $\nu_t$ is inherited from the previous step's solution, and the cost-to-go is under-approximated by the lower-bounding functions $V_{t+1,i}(x)$ generated in the backward recursions completed so far:
\begin{subequations}\label{eq:gmp_1stage}
	\begin{align}
	\rho_t:=\min_{\mu_{t},\nu_{t+1}} \,\, &\int_{\mathbf{C}_t} l_t(x,u)d\mu_{t}+\int_{\mathbf{X}_{t+1}} \max_{i=0,\ldots,z-1} V_{t+1,i}(x) d(\nu_{t+1}\otimes \delta_{t+1}), \label{eq:gmp1s_obj}\\
	\text{s.t.}\quad &\nu_t\otimes\delta_t+\mathcal{L}\mu_t=\pi{\mu_t}+\nu_{t+1}\otimes \delta_{t+1}, \label{eq:gmp1s_dyn1}\\
	&\mu_t \in \mathcal{M}(\mathbf{C}_t)_+, \nu_{t+1}\otimes \delta_{t+1}\in \mathcal{M}(\mathbf{X}_{t+1})_+. \label{eq:gmp1s_cones}
	\end{align}
\end{subequations}
As this problem is infinite-dimensional and therefore intractable, the approximation used is an optimization over a finite vector of moments of the state-action occupation measure $\mu_t$ at time step $t$, and the state occupation measure $\nu_{t+1}$ at time step $t+1$.

We now explain how this finite-moment approximation of \eqref{eq:gmp_1stage} is represented. Let $\mu_{t,z}$ be the state-action occupation measure on $\mathbf{C}_t$ for a single time step $t$ at iteration $z$, and let $m_{t,z}^{\alpha\gamma}$ be its $(\alpha, \gamma)$ moment for non-negative integer vectors $\alpha \in \mathbb{N}^{n_x}$ and $\gamma \in \mathbb{N}^{n_u}$, defined by
\begin{equation}\label{eq:measure_moments}
m_{t,z}^{\alpha\gamma}:=\int_{\mathbf{C}_t} x^\alpha u^\gamma  d\mu_{t,z} \,.
\end{equation}
Following convention from related literature, the vector-valued exponents are interpreted as $x^\alpha=x_1^{\alpha_1} x_2^{\alpha_2}\ldots x_{n_x}^{\alpha_{n_x}}$ and $u^\gamma=u_1^{\gamma_1} u_2^{\gamma_2}\ldots u_{n_u}^{\gamma_{n_u}}$, with $\sum_{i=1}^{n_x}\alpha_{i}+\sum_{i=1}^{n_u} \gamma_{i}\leq 2k$. 

We use the epigraph set $\mathbf{Y}_{t+1,z-1}$ created in the previous backward recursion to accommodate the maximum in the second term of \eqref{eq:gmp1s_obj}. For each time step $t=1,\ldots,T$ and iteration $z$, we define the moments of the augmented state measure $\nu_{t,z}\otimes\delta_{t}$ supported on the epigraph set $\mathbf{Y}_{t,z-1}$:
\begin{equation}\label{eq:terminal_moments}
q^{\alpha\eta}_{t,z}:=\int_{\mathbf{Y}_{t,z-1}} x^\alpha y^\eta  d(\nu_{t,z}\otimes\delta_{t}),
\end{equation}
where $x^\alpha=x_1^{\alpha_1} x_2^{\alpha_2}\ldots x_{n_x}^{\alpha_{n_x}}$ and $y$ is the scalar epigraph variable used in the definition of $\mathbf{Y}_{t,z-1}$, with $\sum_{i=1}^{n_x}\alpha_{i}+\eta\leq 2k$. We collect these moments into vectors $\mathbf{m}_{t,z}$ and $\mathbf{q}_{t,z}$ respectively for each iteration $z$ of the \ac{DDP} algorithm. The number of elements in $\mathbf{m}_{t,z}$ is combinatorial, given by $n_\mathbf{m}=\binom{n_x+n_u+k}{k}$. Similarly, the vector $\mathbf{q}_{t,z}$ has size $n_\mathbf{q}=\binom{n_x+1+k}{k}$. The moments (${q}_{t+1,z}^{\alpha 0}$) of the state distribution at time step $t$, recalling that the superscript $0$ signifies that $y$ is excluded, are used as initial conditions in time step $t+1$.

As with conventional \ac{DDP}, the forward problem in Moment \ac{DDP} for each stage $t=0,\ldots,T-1$ is dual to the backward problem \eqref{eq:sos_backward}. It takes the form of a \ac{SDP} in terms of the moments (up to degree $2k$) of $\mu_{t,z}$ and $\nu_{t+1,z}\otimes\delta_{t+1}$:
\begin{subequations}\label{eq:sdp_forward}
	\begin{align}
	\rho_{t,z}:=\min_{\mathbf{m}_{t,z},\mathbf{q}_{t+1,z}}\enskip &L_{\mathbf{m}_{t,z}}(l_t)+L_{\mathbf{q}_{t+1,z}}(y)\label{eq:primalsdpobj_f}\\
	\text{s.t.}\quad &L_{\mathbf{m}_{t,z}}\Big(x^\alpha-f_t(x,u)^{\alpha}\Big)+q_{{t+1},z}^{\alpha 0}=
	q_{{t,z}}^{\alpha 0}, \enskip \alpha \in \mathbb{N}^{n_x}, \sum_{i=1}^{n_x} \alpha_{i} \leq \lfloor 2k/ \kappa_t\rfloor,\label{eq:primalsdpdyn_f}\\
	&M_{k-d_{g_{t,j}}}(g_{t,j}\mathbf{m}_{t,z})\succeq0, \quad j={1,\ldots,N_{g,t}},\label{eq:primalsdplocal2_f}\\
	&M_{k-d_{v_{t+1,s}}}(v_{t+1,s}\mathbf{q}_{t+1,z})\succeq0, \quad s={1,\ldots,N_{g_{x},t}+z+1},\label{eq:primalsdpmax_f}\\
	&M_k(\mathbf{m}_{t,z})\succeq 0, M_k(\mathbf{q}_{t+1,z})\succeq 0,\label{eq:primalsdppsd_f}
	\end{align}
\end{subequations}
where $f(x,u)^{\alpha}$ is shorthand for $f_1(x,u)^{\alpha_1}f_2(x,u)^{\alpha_2}\allowbreak\ldots f_{n_x}(x,u)^{\alpha_{n_x}}$. 

In brief, the objective (\ref{eq:primalsdpobj_f}) approximates the expected cost $\mathbf{E}_{\mu_{t,z}}(l_t)+\mathbf{E}_{\nu_{t+1,z}}(y)$ as a linear combination of moments of $\mu_{t,z}$ and $\nu_{t+1,z}$. The constraint (\ref{eq:primalsdpdyn_f}) represents a truncated form of the infinite-dimensional constraint (\ref{eq:gmp_dyn}), which means that the state update equation is transformed into a set of linear equalities on the moments of the state-action measure $\mu_{t,z}$ and state measure $\nu_{t+1,z}\otimes\delta_{t+1}$.
Constraints \eqref{eq:primalsdplocal2_f} and \eqref{eq:primalsdpmax_f} jointly represent ``moment relaxations'' of the support constraints \eqref{eq:gmp1s_cones} on $\mu_{t,z}$ and $\nu_{t+1,z}$, and constraints \eqref{eq:primalsdppsd_f} are used to ensure that the moment vectors are compatible with valid measures. We now explain the elements of (\ref{eq:sdp_forward}) in detail. 

The operator $L_{\mathbf{m}_{t,z}}:\mathbb{R}[x,u]\rightarrow\mathbb{R}$ is a linear mapping associated with a measure $\mu_{t,z}$ acting on a polynomial $h\in\mathbb{R}[x,u]$: 
\begin{equation}
L_{\mathbf{m}_{t,z}}(h):=\sum_{\alpha\gamma}h^{\alpha\gamma}m_{t,z}^{\alpha\gamma},
\end{equation}
where $m_{t,z}^{\alpha\gamma}$ are the moments of $\mu_{t,z}$ as defined in (\ref{eq:measure_moments}) and $h^{\alpha\gamma}$ represents the polynomial coefficient of $x^\alpha u^\gamma$, with vectors $\alpha$ and $\gamma$ interpreted in the same manner as for \eqref{eq:measure_moments}. Analogously, $L_{\mathbf{q}_{t,z}}:\mathbb{R}[x,y]\rightarrow\mathbb{R}$ is a linear mapping associated with the moments defined in \eqref{eq:terminal_moments}:
\begin{equation}
L_{\mathbf{q}_{t,z}}(h):=\sum_{\alpha\eta}h^{\alpha\eta}q_{t,z}^{\alpha\eta}.
\end{equation}
These operators are used to approximate the expected cost \eqref{eq:gmp1s_obj} in terms of moments, so that $\mathbf{E}_{\mu_{t,z}}(l_t)+\mathbf{E}_{\nu_{t+1,z}}(y) =\int_{\mathbf{C}_t} l_t d\mu_{t,z}+\int_{\mathbf{Y}_{t+1,z-1}} y d\nu_{t+1,z}$ becomes $L_{\mathbf{m}_{t,z}}(l_t)+L_{\mathbf{q}_{t+1,z}}(y)=\sum_{\alpha\gamma}l_t^{\alpha\gamma}{m}_{t,z}^{\alpha\gamma}+{q}_{t+1,z}^{01}$.

The same linear operator is used in constraint \eqref{eq:primalsdpdyn_f} to enforce consistency of the change in moments from $q_{t,z}^{\alpha 0}$, which are fixed data from the previous stage, and $q_{t+1,z}^{\alpha 0}$ under the dynamics.

The standard \emph{moment matrices} $M_k(\mathbf{m}_{t,z})$ and $M_k(\mathbf{q}_{t,z})$ of degree $k$ in (\ref{eq:primalsdppsd_f}); and the localizing matrices $M_{k-d_{g_{t,j}}}(g_{t,j}\mathbf{m}_{t,z})$ and $M_{k-d_{v_{t+1,s}}}(v_{t+1,s}\mathbf{q}_{t+1,z})$ in \eqref{eq:primalsdplocal2_f}-\eqref{eq:primalsdpmax_f} enforce a condition that ensures the generic vectors of moments are consistent with finite Borel measures on compact set.\footnote{In fact, this is a relaxation of the consistency condition, which is only guaranteed to hold for an infinite series of moments \citep[Theorem 3.8]{Lasserre2014}.} They are derived by applying the linear mappings $L_{\mathbf{m}_{t,z}}$ and $L_{\mathbf{q}_{t,z}}$ to the square of any polynomial $h$ of degree $k$:
\begin{equation}
L_{\mathbf{m}_{t,z}}(h^2)=\mathbf{h}^\top M_k(\mathbf{m}_{t,z}) \mathbf{h}\geq 0,\quad L_{\mathbf{q}_{t,z}}(h^2)=\mathbf{h}^\top M_k(\mathbf{q}_{t,z}) \mathbf{h}\geq 0,
\end{equation}
where $\mathbf{h}$ is the vector of coefficients of $h$. Thus, the moment matrix, which is linear in the elements of $\mathbf{m}_{t,z}$ or $\mathbf{q}_{t,z}$, is constrained to be a symmetric positive semi-definite matrix; the two constraints of \eqref{eq:primalsdppsd_f} are therefore standard \ac{LMI} constraints.

For notational convenience, we now write the constraints defining the epigraph set $\mathbf{Y}_{t,z-1}$ as $v_{t,s}(x,y)\geq 0,s=1,\ldots,N_{g_x}+z+1$. The \emph{localizing matrices} (\ref{eq:primalsdplocal2_f}) and (\ref{eq:primalsdpmax_f}), which are also standard in moment problems, enforce a moment relaxation of the support constraints $g_{t,j}(x,u) \geq 0$ (which define set $\mathbf{C}_t$) and $v_{t+1,s}(x,u) \geq 0$ (which define set $\mathbf{Y}_{t+1,z}$). These are positive semi-definite and of the form
\begin{equation}
\begin{aligned}
	&L_{\mathbf{m}}(g_{t,j} h^2)=\mathbf{h}^\top M_{k-d_{g_{t,j}}}(g_{t,j}\mathbf{m}_{t,z}) \mathbf{h}\geq 0,\\
	&L_{\mathbf{q}}(v_{t+1,s} h^2)=\mathbf{h}^\top M_{k-d_{v_{t+1,s}}}(v_{t+1,s}\mathbf{q}_{t+1,z}) \mathbf{h}\geq 0,
\end{aligned}
\end{equation}
where $d_{g_{t,j}}=\lceil \textrm{deg}(g_{t,j})/2 \rceil$ and $d_{v_{t+1,s}}=\lceil \textrm{deg}(v_{t+1,s})/2 \rceil$. 



Thus, \eqref{eq:sdp_forward} is a relaxation of \eqref{eq:gmp_1stage}, in which each of the constraints has been enforced on only a finite series of moments of $\mu_{t,z}$ and $\nu_{t+1,z}$. It therefore attains a lower optimal value than \eqref{eq:gmp_1stage}; recall that its dual, the \ac{SOS} program \eqref{eq:sos_backward}, is a restriction of the infinite-dimensional \ac{LP} shown in Fig.~\ref{fig:backward_rec} and has a corresponding lower optimal value.

We now state a known result concerning the value of relaxation (\ref{eq:sdp_forward}) as the order $k$ is increased:

\begin{lemma}\label{re:momentconv}
	Let Assumption \ref{as:polynomialcompact} hold, and let the feasible set $\mathbf{C}_t$ and epigraph set $\mathbf{Y}_{t+1,z}$ satisfy Putinar's condition \footnote{One can ensure that the sets $\mathbf{C}_t$ and $\mathbf{Y}_{t+1,z}$ satisfy Putinar's condition (see Definition 3.4 in \cite{Lasserre2007}) by including an additional ball constraint. For instance one can add $g_{N_g+1}(x,u)=R^2-\sum_i^{n_x}x_i^2-\sum_i^{n_u}u_i^2\geq 0$ with $R\in\mathbb{R}$ to the definition of $\mathbf{C}_t$. The assumption that $\mathbf{C}_t$ and $\mathbf{Y}_{t+1,z}$ are both compact makes it straightforward to determine such an $R$ in most cases.}. If $\rho_{t}$ is the optimal solution of the infinite-dimensional \ac{GMP} (\ref{eq:gmp_1stage}) at time step $t$, then as  $k\rightarrow\infty$ the optimal value of \eqref{eq:sdp_forward} approaches $\rho_t$ asymptotically from below.
\end{lemma}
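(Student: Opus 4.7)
The plan is to recognize that Lemma \ref{re:momentconv} is an instance of the standard convergence guarantee for Lasserre's moment hierarchy applied to a \ac{GMP} on compact basic semi-algebraic sets, and to verify that \eqref{eq:sdp_forward} and \eqref{eq:gmp_1stage} satisfy the hypotheses needed for this guarantee. Concretely, I would invoke the framework of \cite{Lasserre2014} for \acp{GMP} with finitely many polynomial data, noting that our problem fits this framework: the objective \eqref{eq:gmp1s_obj} is linear in the two unknown measures with polynomial integrands (using the epigraph lifting $y$ to linearize the $\max$), the dynamical constraint \eqref{eq:gmp1s_dyn1} is a linear equation between moments of $\mu_t$ and $\nu_{t+1}\otimes\delta_{t+1}$ when tested against monomials, and the support sets $\mathbf{C}_t$ and $\mathbf{Y}_{t+1,z-1}$ are compact and satisfy Putinar's condition by assumption.

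First I would establish the trivial direction $\rho_{t,z}\le \rho_t$ for every $k$: given any feasible pair of measures $(\mu_t,\nu_{t+1}\otimes\delta_{t+1})$ for \eqref{eq:gmp_1stage}, the truncated moment vectors $\mathbf{m}_{t,z}$ and $\mathbf{q}_{t+1,z}$ formed via \eqref{eq:measure_moments}--\eqref{eq:terminal_moments} satisfy the dynamic moment equalities \eqref{eq:primalsdpdyn_f} (by testing \eqref{eq:gmp1s_dyn1} against monomials $x^\alpha$ up to the admissible degree), the moment matrix conditions \eqref{eq:primalsdppsd_f}, and the localizing matrix conditions \eqref{eq:primalsdplocal2_f}--\eqref{eq:primalsdpmax_f}, while yielding the same objective value. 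Together with the fact that the relaxation becomes tighter as $k$ grows (larger $k$ imposes more \ac{LMI} constraints while simply embedding lower-order problems), this gives a non-decreasing sequence $\{\rho_{t,z}(k)\}_k$ bounded above by $\rho_t$.

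The main work, and the part I expect to be the principal obstacle, is the reverse inequality in the limit. The plan is to argue by extraction: fix an optimal (or near-optimal up to $\varepsilon_k\to 0$) moment sequence for each $k$, and extend it to a doubly-indexed infinite sequence. Because $\mathbf{C}_t$ and $\mathbf{Y}_{t+1,z-1}$ are compact, all monomials are uniformly bounded, so the moments lie in a compact set; by a diagonal argument we may extract a subsequence along which every moment converges. Putinar's condition together with the positive semi-definiteness of the moment and localizing matrices in \eqref{eq:primalsdplocal2_f}--\eqref{eq:primalsdppsd_f} then implies, via Putinar's solution of the $K$-moment problem, that the limit sequences are the moment sequences of genuine positive Borel measures $\mu_t^\star$ and $\nu_{t+1}^\star\otimes\delta_{t+1}$ supported on $\mathbf{C}_t$ and $\mathbf{Y}_{t+1,z-1}$ respectively. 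Passing \eqref{eq:primalsdpdyn_f} to the limit, which only involves finitely many moments for each monomial test function, shows that $(\mu_t^\star,\nu_{t+1}^\star\otimes\delta_{t+1})$ is feasible for \eqref{eq:gmp_1stage}, and since the objective is a finite linear combination of moments it converges to the corresponding integral. Consequently the limit is $\le\rho_t$ while also being $\ge \lim_k \rho_{t,z}(k)$, which forces equality. The delicate technical ingredient is verifying that Putinar's condition applies uniformly to both support sets (ensured by the footnoted ball constraint) and that the epigraph bound $\overline{y}$ keeps $\mathbf{Y}_{t+1,z-1}$ compact so that the representing measure argument is valid; once these compactness and Archimedean conditions are in place, the remainder of the proof is a direct application of the Lasserre hierarchy convergence theorem \citep[Theorem 3.8]{Lasserre2014}.
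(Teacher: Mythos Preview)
Your proposal is correct and follows essentially the same approach as the paper: both recognize the lemma as an instance of the standard Lasserre moment--SOS hierarchy convergence result, invoking Putinar's Positivstellensatz together with moment determinacy of measures on compact sets (the paper cites Theorem~1 of \cite{Savorgnan2009} rather than \cite{Lasserre2014}, but the content is the same). Your write-up simply spells out in more detail the monotonicity, extraction, and representing-measure steps that underlie the cited theorem.
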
 
\begin{proof}
	Following Theorem 1 in \cite{Savorgnan2009}, one can show that $\rho_{t,z}$, when evaluated for increasing values of the relaxation degree $k$ used in constraint \eqref{eq:primalsdpdyn_f}, is a monotone non-decreasing sequence converging to $\rho_{t}$. This makes use of Putinar's Positivstellensatz, and the fact that measures on compact sets are uniquely determined by their infinite sequence of moments.
\end{proof}

In case of example \eqref{eq:gmp} with $T=2$, we start the forward simulation by solving a moment relaxation of degree $2k$ for $t=0$, a \ac{SDP} of type (\ref{eq:sdp_forward}) that includes the epigraph set $\mathbf{Y}_{1,z-1}$ built from all the value function under-approximators $\{V_{1,i}(x)\}_{i=0}^{z-1}$:
\begin{subequations}\label{eq:sdp_forward_first}
	\begin{align}
	\rho_{0,z}=\min_{\mathbf{m}_{0,z},\mathbf{q}_{1,z}}\enskip &L_{\mathbf{m}_{0,z}}(l_0)+L_{\mathbf{q}_{1,z}}(y)\label{eq:primalsdpobj_first}\\
	\text{s.t.}\quad & L_{\mathbf{m}_{0,z}}\Big(x^\alpha-f_0(x,u)^{\alpha} \Big)+{q}_{1,z}^{\alpha 0}=
	{q}_{0}^{\alpha 0},\enskip \alpha \in \mathbb{N}^{n_x}, \sum_{i=1}^{n_x} \alpha_{i} \leq \lfloor 2k/\kappa_0\rfloor,\label{eq:primalsdpdyn_first}\\
	&M_{k-d_{g_{0,j}}}(g_{0,j}\mathbf{m}_{0,z})\succeq0, \quad j={1,\ldots,N_{g,t}},\label{eq:primalsdplocal2_first}\\
	&M_{k-d_{v_{1,s}}}(v_{1,s}\mathbf{q}_{1,z})\succeq0, \quad s={1,\ldots,N_{g_{x},t}+z+1},\label{eq:primalsdpmax_first}\\
	& M_k(\mathbf{m}_{0,z})\succeq 0, M_k(\mathbf{q}_{1,z})\succeq 0,\label{eq:primalsdppsd_first}
	\end{align}
\end{subequations}
where we note that moments ${q}_{0}^{\alpha 0}$ (defined in the same way as \eqref{eq:terminal_moments}) are fixed data derived from the initial state distribution $\nu_0\otimes\delta_0$. If (\ref{eq:sdp_forward_first}) and (\ref{eq:sos_backward_1t}) are strictly feasible, there is no duality gap and $\rho_{0,z}=\theta_{LB,z}$.

The primal problem for $t=1$ is a moment relaxation with the optimal solution $\hat{\mathbf{q}}_{1,z}$ of (\ref{eq:sdp_forward_first}) as input data:
\begin{subequations}\label{eq:sdp_forward_2t}
	\begin{align}
	\rho_{1,z}=\min_{\mathbf{m}_{1,z},\mathbf{q}_{2,z}}\enskip &L_{\mathbf{m}_{1,z}}(l_1)+L_{\mathbf{q}_{2,z}}(H)\label{eq:primalsdpobj_2t}\\
	\text{s.t.}\quad & L_{\mathbf{m}_{1,z}}\Big(x^\alpha-f_1(x,u)^{\alpha}\Big)+{q}_{2,z}^{\alpha 0}= \hat{q}_{1,z}^{\alpha 0},\enskip \alpha \in \mathbb{N}^{n_x}, \sum_{i=1}^{n_x} \alpha_{i} \leq \lfloor 2k/ \kappa_1\rfloor,\label{eq:primalsdpdyn_2t}\\
	&M_{k-d_{g_{1,j}}}(g_{1,j}\mathbf{m}_{1,z})\succeq0, \quad j={1,\ldots,N_{g,t}},\label{eq:primalsdplocal2_2t}\\
	&M_{k-d_{v_{2,s}}}(v_{2,s}\mathbf{q}_{2,z})\succeq0, \quad s={1,\ldots,N_{g_{x},t}},\label{eq:primalsdpmax_2t}\\
	&M_k(\mathbf{m}_{1,z})\succeq 0, M_k(\mathbf{q}_{2,z})\succeq 0,\label{eq:primalsdppsd_2t}
	\end{align}
\end{subequations}
The updated moments $\hat{\mathbf{q}}_{1,z}$ computed by (\ref{eq:sdp_forward_first}) can then be used in a subsequent backward recursion to generate a new approximate value function in the backward recursion. If (\ref{eq:sos_backward_2t}) and (\ref{eq:sdp_forward_2t}) are strictly feasible, there is no duality gap and $\rho_{1,z}=\theta_{1,z}$. Using the optimal values of (\ref{eq:sdp_forward_first}) and (\ref{eq:sdp_forward_2t}), we define the upper bound as $\rho_{UB,z}=L_{\hat{\mathbf{m}}_{0,z}}(l_0)+L_{\hat{\mathbf{m}}_{1,z}}(l_1)+L_{\hat{\mathbf{q}}_{2,z}}(H)$ for use in the termination criterion of the algorithm described below. 

\subsection{Moment DDP algorithm} \label{algorithm}
Moment \ac{DDP} is stated formally in Algorithm \ref{alg:moment_sos}, and we now remark on some aspects of its implementation. 

Firstly, we note that the degree of the under-approximating value functions can in practice be chosen to be relatively low, since a single function need not be an active bound over the entire state space. This is illustrated in Fig.~\ref{fig:proofillustration}\if\thesismode0 in the Appendix\fi, which shows the lower-bounding functions generated by a sequence of six backward recursions for the single storage example of Section \ref{numerics}, alongside the approximation generated by discretized \ac{DP}. 

\begin{algorithm}[t]
	\setstretch{1}
	\caption{Moment \ac{DDP}}\label{alg:moment_sos}
	\textbf{Input:} Horizon $T$, functions $f_t(x,u)$, $l_t(x,u)$, $H(x)$, $g_{t,j}(x,u)$, tolerance $\epsilon$, initial moments $\mathbf{q}_{t,0}$\\
	\textbf{Output:} Upper bound $\rho_{UB,z}$, lower bound $\theta_{LB,z}$, epigraph sets $\mathbf{Y}_{t,z}$, trajectory moments $\mathbf{q}_{t,z}$\\
	\textbf{Indices:} Iteration $z$, time step $t$\\
	\begin{algorithmic}[1]
		\State $z\leftarrow 0$
        \State Create set $\mathbf{Y}_{T,0}$ parameterized by $H(x)$
        \For{$t=T-1,\cdots,1$} \Comment{Initial backward recursion: Section \ref{backward}}
		\State Solve (\ref{eq:sos_backward}) to obtain $V_{t,0}(x)$
		\State Create set $\mathbf{Y}_{t,0}$ parameterized by $V_{t,0}(x)$.
		\EndFor
		\Repeat \Comment{Repeat procedure until predefined tolerance $\epsilon$ is achieved}
		\State $z\leftarrow z+1$
		\For{$t=0,\cdots,T-1$} \Comment{Forward simulation: Section \ref{forward}}
		\State Solve (\ref{eq:sdp_forward}) to obtain state moments $\mathbf{q}_{t+1,z}$
		\EndFor
		\State Compute $\rho_{UB,z}=\sum_{t=0}^{T-1}{L}_{\hat{\mathbf{m}}_{t,z}}(l_t)+{L}_{\hat{\mathbf{q}}_{T,z}}(H)$ (optimal values of (\ref{eq:sdp_forward}))
        \For{$t=T-1,\cdots,0$} \Comment{Backward recursion: Section \ref{backward}}
		\State Solve (\ref{eq:sos_backward}) to obtain $V_{t,z}(x)$
        \State $\mathbf{Y}_{t,z} \leftarrow \mathbf{Y}_{t,z-1} \cap \{(x,y):y \geq V_{t,z}(x)\}$
		\EndFor
		\State Set $\theta_{LB,z}=\theta_{0,z}$ (optimal value of (\ref{eq:sos_backward}) for $t=0$)
		\Until{$\rho_{UB,z}-\theta_{LB,z}<\epsilon$}\label{basic_iter}
	\end{algorithmic}
\end{algorithm}

\if\thesismode1
\begin{figure}
	\centering
	\includegraphics[trim={0 1cm 0 0},clip,scale=1]{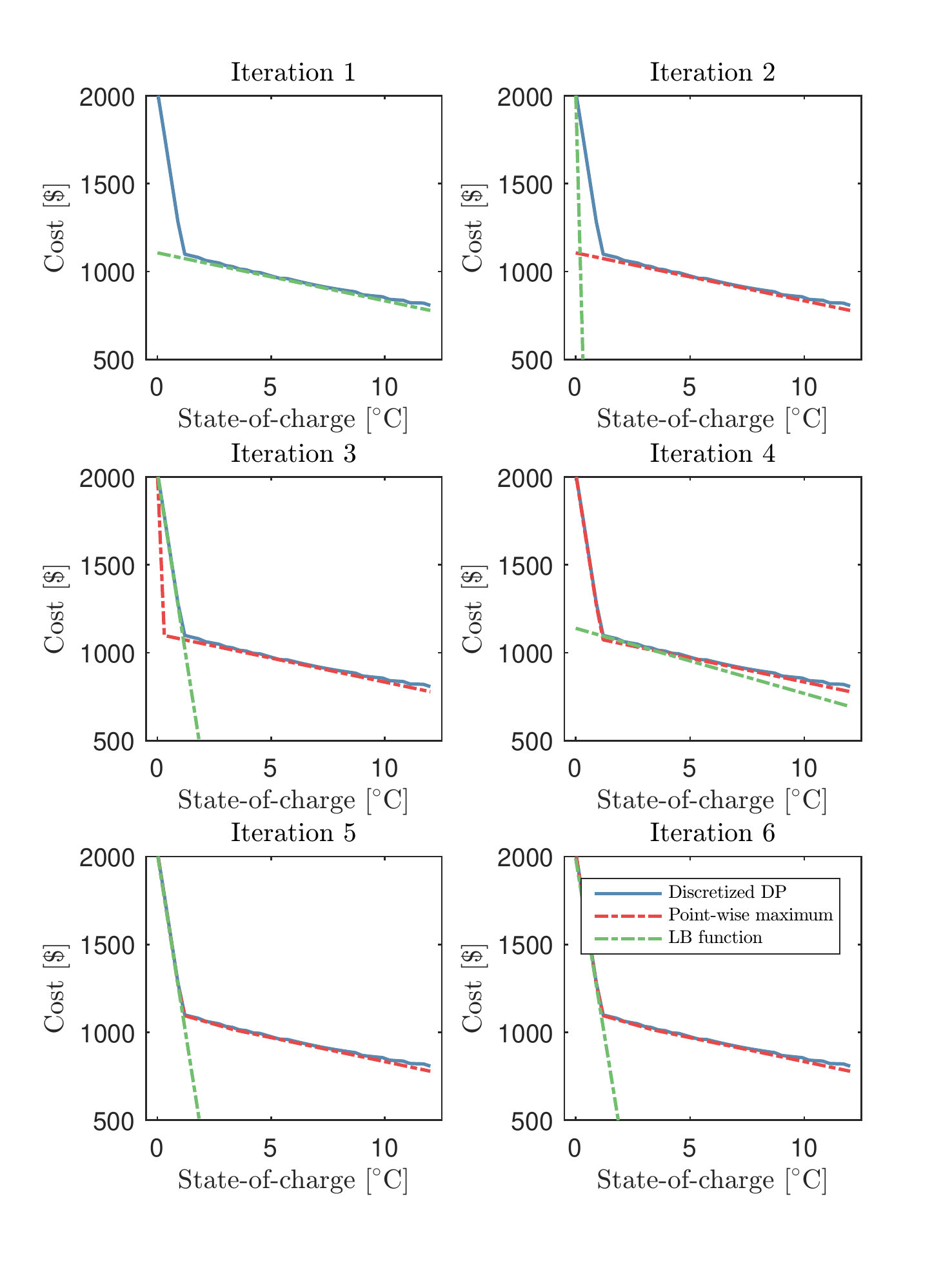}
	\caption[Sequence of six backward recursions for the single storage example]{Single storage example of Section \ref{numerics}: Cost of stored energy in the beginning of April using affine basis functions and $2k=4$, shown for six backward recursions.}\label{fig:proofillustration}
\end{figure}
\fi

Secondly, it can be attractive to preserve convexity of the lower-bounding functions added in the backward recursion, in order to reduce the cost of computing forward control actions. Following the approach of \cite{thanh2013}, convexity can be imposed on polynomials by constraining the Hessian of the value function in \eqref{eq:sos_backward} and adding additional variables to the primal \eqref{eq:sdp_forward}. This may of course cause an additional reduction in the tightness of the value function approximation. 

Thirdly, if the problem input data remains constant over multiple time steps $t$, it becomes relatively straightforward to adapt a single stage of the forward and backward recursions in Algorithm \ref{alg:moment_sos} to span these steps. In this case, one can use a single polynomial to approximate a value function over the relevant interval on the time coordinate $x_c$. Value functions can then be extracted for a time step within a stage by setting $x_c$ to the relevant value. Throughout this \if\thesismode1 chapter\else paper\fi, however, we maintain equivalence between problem stages and time steps $t$ in \eqref{eq:energystorage} for clarity of notation.

\subsection{Extension to stochastic dynamics}

The Moment \ac{DDP} approach can be extended to stochastic polynomial dynamics, in which the state update is described by a function $f_t(x,u,w)$, without increasing the computational complexity significantly. Vector $w$ denotes an independent disturbance following the distribution $\omega_t$ supported on $\mathbf{W}_t$, of which the statistical moments can be computed; and entering polynomially into the state update.

If these conditions hold, moment and \ac{SOS} relaxations can be formulated using the same procedure described for generic optimal control problems in \cite{Savorgnan2009}. Specifically, the operator $\mathcal{L}_t$ is replaced by a new linear operator $\tilde{\mathcal{L}}_t:\mathcal{M}(\mathbf{C}_t)_+ \rightarrow \mathcal{M}(\mathbf{X}_{t+1})_+$ defined as
\begin{equation}\label{eq:stochfoias}
\pi\mu_{t+1}(A)=\tilde{\mathcal{L}}_t\mu_{t}(A):=\int_{\mathbf{C}_t}\int_{\mathbf{W}_t} 1_A(f_t(x,u,w))d\omega_t d\mu_t,
\end{equation}
for all Borel sets $A$ of $\mathbf{X}_{t+1}$. For simplicity of exposition, however, we have excluded stochastic dynamics from the derivations and numerical examples in the present \if\thesismode1 chapter\else paper\fi, and the only uncertainty we include arises from the initial state distribution.

\section{Convergence properties}\label{convergence}
In this section, we analyze the convergence of Algorithm \ref{alg:moment_sos} using an instance of the \ac{GMP} \eqref{eq:gmp} with $T=2$, and argue subsequently that the results extend to longer horizons. Lemma \ref{lemma:tightening} states that if the upper bound is strictly larger than the lower bound, (a relaxation of) the epigraph set strictly tightens from one iteration to the next. Lemmas \ref{lemma:upperboundbound} and \ref{lemma:lowerboundbound} bound the values of $\theta_{LB,z}$ and $\rho_{UB,z}$ used in the termination criterion. Finally, Theorem \ref{th:convergence} concludes that the Moment \ac{DDP} approach converges in finite iterations for any tolerance $\epsilon > 0$.


To facilitate these derivations, we say the moments $\hat{\mathbf{q}}_{1,z}$ computed by the SDP relaxation (\ref{eq:sdp_forward_first}) are elements of the \emph{relaxed} epigraph set, which we define as
\begin{equation}\label{eq:relaxedepigraph}
\begin{aligned}
\tilde{\mathbf{Y}}_{1,z}:=\{\mathbf{{q}}_{1,z}\in\mathbb{R}^{n_\mathbf{q}}\,\,:\,\, &M_k(\mathbf{q}_{1,z})\succeq 0;\\
&M_{k-d_{g_{j,1}}}(g_{j,1}\mathbf{q}_{1,z})\succeq0, \quad j=1,\ldots,N_{g_x};\\
&M_{k-d_{V_{1,i}}}((y-V_{1,i})\mathbf{q}_{1,z})\succeq0, \quad i={0,\ldots,z-1};\\
&M_{k-d_{y}}((\overline{y}-y)\mathbf{q}_{1,z})\succeq0\}.
\end{aligned}
\end{equation}

\begin{lemma}\label{lemma:tightening}
	  If $\theta_{LB,z} < \rho_{UB,z}$ at some iteration $z$, the relaxed epigraph set strictly tightens, i.e.~${\tilde{\mathbf{Y}}}_{1,z} \subset 	\tilde{\mathbf{Y}}_{1,z-1}$. Moreover $\theta_{LB,z+1} \geq \theta_{LB,z}$.
\end{lemma}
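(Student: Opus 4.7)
The plan is to prove the two assertions separately, both resting on strong duality between each stage's forward SDP and its dual backward SOS program, followed by a positive-semidefiniteness argument on a single localizing matrix.

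First I would establish a single gap identity for the two-stage case. By strong duality at $t=1$ (both problems share the data $\hat{\mathbf{q}}_{1,z}$ produced by the $t=0$ forward pass), $\theta_{1,z}=\rho_{1,z}$ gives $L_{\hat{\mathbf{q}}_{1,z}}(V_{1,z})=L_{\hat{\mathbf{m}}_{1,z}}(l_1)+L_{\hat{\mathbf{q}}_{2,z}}(H)$, i.e., the stage-1 cost-to-go in moments equals the newly generated cut evaluated against those moments. By strong duality at $t=0$, $\theta_{LB,z}=\rho_{0,z}=L_{\hat{\mathbf{m}}_{0,z}}(l_0)+L_{\hat{\mathbf{q}}_{1,z}}(y)$. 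Substituting into the definition of $\rho_{UB,z}$ yields
\[
\rho_{UB,z}-\theta_{LB,z}=L_{\hat{\mathbf{q}}_{1,z}}\bigl(V_{1,z}-y\bigr),
\]
expressing the duality gap purely as the expectation, under $\hat{\mathbf{q}}_{1,z}$, of the violation of the cut $y\geq V_{1,z}(x)$ that is about to be appended to the epigraph set.

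Next I would argue strict containment. The inclusion $\tilde{\mathbf{Y}}_{1,z}\subseteq\tilde{\mathbf{Y}}_{1,z-1}$ holds by construction, since the former repeats every defining constraint of the latter and augments it with exactly one additional localizing matrix constraint attached to the newly introduced cut. For strictness, I would exhibit the forward-pass optimizer as the separating moment vector: it satisfies every other constraint defining the larger set by construction, while a positive gap combined with the identity above implies $L(y-V_{1,\text{new}})<0$. Because this scalar is precisely the $(1,1)$ entry of the relevant localizing matrix $M_{k-d_{V}}((y-V_{1,\text{new}})\,\hat{\mathbf{q}})$, that matrix cannot be positive semidefinite, so the moment vector is expelled from the tighter epigraph set and the inclusion is strict.

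For the second claim, $\theta_{LB,z+1}\geq\theta_{LB,z}$, I would argue purely by feasibility of the backward SOS at $t=0$. Enlarging the generator list of the quadratic module (by appending one more cut) can only enlarge the family of admissible Putinar certificates for $y-V_{0}(x)$: any certificate valid at iteration $z$ remains valid at $z+1$ by assigning a zero multiplier to the new generator. Since the objective $\langle\mathbf{V}_0,\mathbf{q}_0\rangle$ depends only on the fixed initial moments and is therefore unchanged across iterations, the maximum over the enlarged feasible set is at least the previous maximum.

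The main obstacle I anticipate is the careful bookkeeping of indices: $\tilde{\mathbf{Y}}_{1,z}$ contains the cuts $V_{1,0},\ldots,V_{1,z-1}$, and one must align the cut that separates $\tilde{\mathbf{Y}}_{1,z-1}$ from $\tilde{\mathbf{Y}}_{1,z}$ with the correct iterate of the forward/backward ordering used by Algorithm~\ref{alg:moment_sos}, so that the gap identity is attributed to the right transition between epigraph sets. Ensuring strict feasibility of the forward SDP and backward SOS at each stage (so that strong duality applies) is otherwise routine, and the extension from $T=2$ to general $T$ follows by the same stagewise application of strong duality, to be handled in the later induction argument for Theorem~\ref{th:convergence}.
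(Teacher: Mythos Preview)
Your proposal is correct and follows essentially the same approach as the paper: establish the gap identity $\rho_{UB,z}-\theta_{LB,z}=L_{\hat{\mathbf{q}}_{1,z}}(V_{1,z}-y)$ via strong duality between the stage-wise forward SDP and backward SOS programs, then show strict containment by observing that this quantity is (minus) the $(1,1)$ entry of the new localizing matrix, which must be nonnegative for positive semidefiniteness. The only minor difference is in the monotonicity argument for $\theta_{LB,z}$: the paper argues on the primal side (the forward SDP at $t=0$ minimizes over a smaller feasible set), whereas you argue on the dual side (the backward SOS at $t=0$ maximizes over a larger quadratic module); both are valid, and your flagging of the index bookkeeping is well placed.
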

\begin{proof}
	Let $(\hat{\mathbf{m}}_{0,z}, \hat{\mathbf{q}}_{1,z}, \hat{\mathbf{m}}_{1,z}, \hat{\mathbf{q}}_{2,z})$ be a solution computed by the moment relaxations (\ref{eq:sdp_forward_first}) and (\ref{eq:sdp_forward_2t}) during the forward simulation. Let $\langle\hat{\mathbf{V}}_{1,z}, \hat{\mathbf{q}}_{1,z}\rangle$ be the optimal value of the backward recursion program \eqref{eq:sos_backward_2t}. By definitions of $\theta_{LB,z}$ and $\rho_{UB,z}$, and strong duality between the second-stage problems \eqref{eq:sdp_forward_2t} and \eqref{eq:sos_backward_2t}, we have
	\begin{align*}
	\theta_{LB,z}={L}_{\hat{\mathbf{m}}_{0,z}}(l_0)+L_{\hat{\mathbf{q}}_{1,z}}(y) \quad \text{and} \quad \rho_{UB,z} & ={L}_{\hat{\mathbf{m}}_{0,z}}(l_0)+L_{\hat{\mathbf{m}}_{1,z}}(l_1)+{L}_{\hat{\mathbf{q}}_{2,z}}(H) \nonumber \\
   & ={L}_{\hat{\mathbf{m}}_{0,z}}(l_0) + \langle\hat{\mathbf{V}}_{1,z}, \hat{\mathbf{q}}_{1,z}\rangle.
	\end{align*}
	Thus, $\theta_{LB,z} < \rho_{UB,z}$ implies ${L}_{\hat{\mathbf{q}}_{1,z}}(y) <  \langle\hat{\mathbf{V}}_{1,z}, \hat{\mathbf{q}}_{1,z}\rangle$. For the next iteration, we add the LMI constraint $M_{k-d_{\hat{V}_{1,z}}}((y-\hat{V}_{1,z})\mathbf{q}_{1,z})\succeq0$ to $\tilde{\mathbf{Y}}_{1,z}$, and it is straightforward to show (see \cite[eq.~(14)]{Molzahn2015} for a similar example) that the first diagonal element of this matrix is the linear expression $L_{\mathbf{q}_{1,z}}(y)- L_{\mathbf{q}_{1,z}}{(\hat{V}_{1,z})}={q}_{1,z}^{01}-\langle\hat{\mathbf{V}}_{1,z},\mathbf{q}_{1,z}\rangle$. Because this is on the diagonal of a matrix that is constrained to be positive semidefinite, it must be nonnegative. Thus the new set $\tilde{\mathbf{Y}}_{1,z}$ contains the constraint that $L_{\mathbf{q}_{1,z}}(y) \geq \langle\hat{\mathbf{V}}_{1,z}, {\mathbf{q}}_{1,z}\rangle$.
    
     The old moment vector $\hat{\mathbf{q}}_{1,z}$ is now infeasible at iteration $z+1$. Thus, $\tilde{\mathbf{Y}}_{1,z}$ must be a strict subset of $\tilde{\mathbf{Y}}_{1,z-1}$. Since \eqref{eq:sdp_forward_first} is a minimization over a subset of the previous feasible set, the cost attained may be no lower than at the previous iteration.
\end{proof}

Let $\rho^*_k$ be the optimal value of the undecomposed moment relaxation of \eqref{eq:gmp} with $T=2$:
\begin{equation}\label{eq:undecomposedsdp}
\begin{aligned}
\rho^*_k:=\min_{\mathbf{m}_{0},\mathbf{q}_{1},\mathbf{m}_{1},\mathbf{q}_{2}}\enskip &L_{\mathbf{m}_0}(l_0)+L_{\mathbf{m}_1}(l_1)+L_{\mathbf{q}_2}(H)\\
&\text{s.t.}\quad\textrm{\eqref{eq:primalsdpdyn_first}-\eqref{eq:primalsdppsd_first}},\textrm{\eqref{eq:primalsdpdyn_2t}-\eqref{eq:primalsdppsd_2t}}
\end{aligned}
\end{equation}
The following lemmas bound the possible values of the lower and upper bounds returned by Algorithm \ref{alg:moment_sos}:
\begin{lemma}\label{lemma:upperboundbound}
	At any iteration $z$, $\rho_{UB,z} \geq \rho_k^*$, the optimal value of the undecomposed moment relaxation \eqref{eq:undecomposedsdp}.
\end{lemma}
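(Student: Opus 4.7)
The plan is to observe that any pair of optima produced by the decomposed moment programs (\ref{eq:sdp_forward_first}) and (\ref{eq:sdp_forward_2t}) can be assembled into a feasible point of the undecomposed relaxation (\ref{eq:undecomposedsdp}) whose objective value is exactly $\rho_{UB,z}$. Because (\ref{eq:undecomposedsdp}) is a minimization, this feasibility fact then yields $\rho_{UB,z}\geq\rho_k^*$ directly.

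First, I would let $(\hat{\mathbf{m}}_{0,z},\hat{\mathbf{q}}_{1,z})$ be the optimizer of (\ref{eq:sdp_forward_first}) and $(\hat{\mathbf{m}}_{1,z},\hat{\mathbf{q}}_{2,z})$ be the optimizer of (\ref{eq:sdp_forward_2t}), the latter taking the state-only moments $\hat{q}_{1,z}^{\alpha 0}$ of $\hat{\mathbf{q}}_{1,z}$ as the initial data for its dynamics constraint. By construction, the first tuple satisfies (\ref{eq:primalsdpdyn_first})--(\ref{eq:primalsdppsd_first}) and the second tuple satisfies (\ref{eq:primalsdpdyn_2t})--(\ref{eq:primalsdppsd_2t}). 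Since the coupling between the two stages is through exactly those moments $\hat{q}_{1,z}^{\alpha 0}$ that appear on both sides of the constraint (\ref{eq:primalsdpdyn_2t}), the concatenated vector $(\hat{\mathbf{m}}_{0,z},\hat{\mathbf{q}}_{1,z},\hat{\mathbf{m}}_{1,z},\hat{\mathbf{q}}_{2,z})$ is a feasible point of (\ref{eq:undecomposedsdp}).

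Second, I would evaluate the undecomposed objective $L_{\mathbf{m}_0}(l_0)+L_{\mathbf{m}_1}(l_1)+L_{\mathbf{q}_2}(H)$ at this concatenated point. By the definition given just after (\ref{eq:sdp_forward_2t}), this value equals $\rho_{UB,z}$. Since $\rho_k^*$ is the infimum of the same objective over a superset (or equal set) of feasible points, we obtain $\rho_{UB,z}\geq\rho_k^*$, as claimed.

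The only subtlety worth flagging is the bookkeeping check that the stage-coupling moments $\hat{q}_{1,z}^{\alpha 0}$ indeed enter constraints (\ref{eq:primalsdpdyn_first}) and (\ref{eq:primalsdpdyn_2t}) in a mutually consistent way when viewed as a single joint problem; this follows automatically because the two stages share these moments by definition of the forward pass. Otherwise the argument is essentially a one-line ``decomposed-feasible implies jointly-feasible'' observation, so I do not anticipate any technical difficulty beyond matching notation.
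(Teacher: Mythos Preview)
Your proposal is correct and follows essentially the same approach as the paper: assemble the forward-pass optimizers $(\hat{\mathbf{m}}_{0,z},\hat{\mathbf{q}}_{1,z},\hat{\mathbf{m}}_{1,z},\hat{\mathbf{q}}_{2,z})$ into a feasible point of the undecomposed relaxation (\ref{eq:undecomposedsdp}) and observe that its objective value there equals $\rho_{UB,z}$. The paper's proof is the same one-line ``decomposed-feasible implies jointly-feasible'' argument, stated more tersely.
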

\begin{proof}
Let $(\hat{\mathbf{m}}_{0,z}, \hat{\mathbf{q}}_{1,z}, \hat{\mathbf{m}}_{1,z}, \hat{\mathbf{q}}_{2,z})$ be a solution computed by the moment relaxations (\ref{eq:sdp_forward_first}) and (\ref{eq:sdp_forward_2t}) during the forward simulation. Examination of the constraints of \eqref{eq:undecomposedsdp} shows that this is a feasible but in general suboptimal solution, thus $\rho_{UB,z} = {L}_{\hat{\mathbf{m}}_0}(l_0)+{L}_{\hat{\mathbf{m}}_1}(l_1)+{L}_{\hat{\mathbf{q}}_2}(H) \geq \rho_k^* $.   
\end{proof}

\begin{lemma}\label{lemma:lowerboundbound}
At any iteration $z$, $\theta_{LB,z} \leq \rho^*$, the optimal value of the \ac{GMP} \eqref{eq:gmp}.
\end{lemma}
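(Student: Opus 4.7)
The plan is to show that every under-approximator $V_{t,z}$ produced by the backward recursion is pointwise dominated by the true value function $V_t^*$, and then integrate this inequality against the initial distribution to obtain the bound on $\theta_{LB,z}$. Theorem~1 identifies the right-hand side $\rho^* = \mathbf{E}_{\nu_0}(V_0^*(x_0))$.

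First, I would establish pointwise dominance by backward induction on $t$. For the base case $t = T-1$: any feasible $V_{T-1,z}$ of \eqref{eq:sos_backward} satisfies \eqref{eq:bwd_bellman}, i.e.\ $V_{T-1,z}(x) \leq l_{T-1}(x,u) + V_{T-1,z}(f_{T-1}(x,u))$ on $\mathbf{C}_{T-1}$, together with \eqref{eq:bwd_bc}, i.e.\ $V_{T-1,z}(x) \leq H(x)$ on $\mathbf{X}_T$. Substituting the latter into the former at $x' = f_{T-1}(x,u)$ and minimizing over $u \in \mathbf{U}_{T-1}(x)$ yields $V_{T-1,z}(x) \leq V_{T-1}^*(x)$ for all $x \in \mathbf{X}_{T-1}$.

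For the induction step, suppose $V_{t+1,i}(x) \leq V_{t+1}^*(x)$ on $\mathbf{X}_{t+1}$ for all $i = 0,\ldots,z$. Then $\max_i V_{t+1,i}(x) \leq V_{t+1}^*(x) \leq \overline{y}$, so for any $x \in \mathbf{X}_{t+1}$ the point $(x, V_{t+1}^*(x))$ lies in the epigraph set $\mathbf{Y}_{t+1,z}$. The Putinar certificate \eqref{eq:putinar_XN_backward} forces $y - V_{t,z}(x) \geq 0$ on $\mathbf{Y}_{t+1,z}$, so evaluating at $y = V_{t+1}^*(x)$ gives $V_{t,z}(x) \leq V_{t+1}^*(x)$ on $\mathbf{X}_{t+1}$. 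Combined with the Bellman inequality \eqref{eq:putinar_S_backward} and minimization over $u \in \mathbf{U}_t(x)$, this delivers $V_{t,z}(x) \leq V_t^*(x)$ on $\mathbf{X}_t$, closing the induction.

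Finally, the objective of \eqref{eq:sos_backward} at $t = 0$ evaluates $V_{0,z}$ against the moments $\mathbf{q}_{0,z}$ of the fixed initial measure $\nu_0 \otimes \delta_0$, so
\begin{equation*}
\theta_{LB,z} = \theta_{0,z} = \langle \mathbf{V}_{0,z}, \mathbf{q}_{0,z} \rangle = \int_{\mathbf{X}_0} V_{0,z}(x)\, d(\nu_0 \otimes \delta_0).
\end{equation*}
Applying the pointwise bound $V_{0,z}(x) \leq V_0^*(x)$ under the integral and invoking Theorem~1 yields $\theta_{LB,z} \leq \mathbf{E}_{\nu_0}(V_0^*(x_0)) = \rho^*$, as claimed. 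The only subtlety worth being careful about is the requirement $V_{t+1}^*(x) \leq \overline{y}$, needed to place $(x, V_{t+1}^*(x))$ inside $\mathbf{Y}_{t+1,z}$; this is exactly the a priori bound on the cost-to-go assumed when $\overline{y}$ was chosen (see the footnote accompanying the definition of $\mathbf{Y}_{t,z}$), and the induction hypothesis itself guarantees the stronger $\max_i V_{t+1,i}(x) \leq V_{t+1}^*(x) \leq \overline{y}$ so no further argument is required. The extension to horizon $T > 2$ follows by running the same backward induction over all $T$ stages.
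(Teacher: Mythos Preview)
Your proof is correct and rests on the same idea as the paper's, namely that each $V_{t,z}$ produced by the backward recursion is a subsolution of the Bellman equation and hence pointwise dominated by $V_t^*$; integrating at $t=0$ then gives the bound via Theorem~1. The paper's two-sentence argument is terser: it invokes the subsolution property already noted after \eqref{eq:dual} and observes that replacing $\max_i V_{1,i}$ by $V_1^*$ in the epigraph constraint of \eqref{eq:dual_1t} would yield exactly $\rho^*$, so the actual (tighter) constraint gives $\theta_t \le \rho^*$, and the SOS restriction \eqref{eq:sos_backward_1t} can only decrease this further. Your version simply unrolls the backward induction explicitly and works directly with the Putinar certificates rather than passing through the infinite-dimensional LP first, which is a cleaner self-contained presentation of the same mechanism.
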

\begin{proof}
By inserting the optimal solution $V_1^*(x)$ of the undecomposed \ac{LP} \eqref{eq:dual} with $T=2$ into the epigraph of the first stage \ac{LP} \eqref{eq:dual_1t}, it can be seen that the optimal value $\theta_t$ of \eqref{eq:dual_1t} is bounded from above by the optimal values $\theta^*=\rho^*$ of the undecomposed \acp{LP} \eqref{eq:gmp} and \eqref{eq:dual} with $T=2$. Since the \ac{SOS} approximation \eqref{eq:sos_backward_1t} of the first stage \ac{LP} \ref{eq:dual_1t} is more restricted, we have $\theta_{LB,z}\leq\rho^*$.
\end{proof}

Finally, we can state the following result concerning the convergence of Algorithm \ref{alg:moment_sos}:
\begin{theorem}\label{th:convergence}
	Given a tolerance $\epsilon > 0$, Algorithm \ref{alg:moment_sos} attains $\rho_{UB,z}-\theta_{LB,z} \leq \epsilon$ in a finite number of iterations when applied to \ac{GMP} \eqref{eq:gmp} with $T=2$. Moreover, the sequence $\{\theta_{LB,z}\}$ converges to a value $\hat{\theta}_{LB}$ satisfying $\rho_k^* \leq \hat{\theta}_{LB} \leq \rho^*$, where $\rho^*$ is the optimal value of the original multi-stage \ac{GMP} \eqref{eq:gmp} and $\rho_k^*$ is the optimal value of its degree-$k$ moment relaxation \eqref{eq:undecomposedsdp}.
\end{theorem}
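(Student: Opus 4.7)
The plan is to prove the theorem for $T=2$ via a four-step argument — monotone convergence of the lower bound, compactness-based extraction of convergent subsequences of the forward moments and backward cuts, a contradiction showing the limit gap is zero, and derivation of the sandwich bounds — and then extend to arbitrary $T$ by induction on the horizon.

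First I would establish that $\{\theta_{LB,z}\}$ is non-decreasing and bounded. Monotonicity follows from Lemma \ref{lemma:tightening}, and $\theta_{LB,z}\leq\rho^*$ from Lemma \ref{lemma:lowerboundbound}, so $\theta_{LB,z}\to\hat{\theta}_{LB}\leq\rho^*$ by the monotone convergence theorem. The key remaining task is to show $\lim_{z\to\infty}(\rho_{UB,z}-\theta_{LB,z})=0$, from which finite termination for any $\epsilon>0$ is immediate and from which one also gets $\rho_{UB,z}\to\hat{\theta}_{LB}$ along the same sequence.

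For this central step I would use compactness-based subsequence extraction. From the proof of Lemma \ref{lemma:tightening} the per-iteration gap equals $\langle\hat{\mathbf{V}}_{1,z},\hat{\mathbf{q}}_{1,z}\rangle - L_{\hat{\mathbf{q}}_{1,z}}(y)$. The forward iterates $\hat{\mathbf{q}}_{1,z}$ lie in a compact set by construction of the relaxed epigraph \eqref{eq:relaxedepigraph} (in particular the upper bound $\overline{y}$ and the moment matrix constraints make $\tilde{\mathbf{Y}}_{1,0}$ compact), and the cut-coefficient vectors $\hat{\mathbf{V}}_{1,z}$ can be shown to be uniformly bounded (see obstacle below). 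Bolzano--Weierstrass then yields a joint convergent subsequence $(\hat{\mathbf{q}}_{1,z_j},\hat{\mathbf{V}}_{1,z_j})\to(\hat{\mathbf{q}}_\infty,\hat{\mathbf{V}}_\infty)$. Suppose for contradiction that the gap exceeds some $\delta>0$ along this subsequence; continuity of the bilinear pairing then gives $\langle\hat{\mathbf{V}}_\infty,\hat{\mathbf{q}}_\infty\rangle - L_{\hat{\mathbf{q}}_\infty}(y)\geq\delta$. But at iteration $z_{j+1}$ the cut induced by $\hat{\mathbf{V}}_{1,z_j}$ has already been incorporated into the feasible set, so $L_{\hat{\mathbf{q}}_{1,z_{j+1}}}(y)\geq\langle\hat{\mathbf{V}}_{1,z_j},\hat{\mathbf{q}}_{1,z_{j+1}}\rangle$. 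Passing to the limit along a further subsequence ensuring $\hat{\mathbf{q}}_{1,z_{j+1}}\to\hat{\mathbf{q}}_\infty$ yields $L_{\hat{\mathbf{q}}_\infty}(y)\geq\langle\hat{\mathbf{V}}_\infty,\hat{\mathbf{q}}_\infty\rangle$, contradicting the strict positivity of $\delta$. Hence the gap tends to zero. The sandwich bound $\rho_k^*\leq\hat{\theta}_{LB}$ then follows from Lemma \ref{lemma:upperboundbound} because $\rho_k^*\leq\rho_{UB,z}\to\hat{\theta}_{LB}$. For $T>2$, induction on the horizon applies: with convergence established for horizons up to $T-1$, the inductive hypothesis supplies convergent cuts for the inner $(T-1)$-stage subproblem, and the same compactness-and-contradiction argument at stage $0$ closes the outer gap.

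The main obstacle I anticipate is the uniform boundedness of the cut coefficient vectors $\hat{\mathbf{V}}_{1,z}$: the \ac{SOS} program \eqref{eq:sos_backward_2t} does not explicitly bound these coefficients, so compactness of $\{\hat{\mathbf{V}}_{1,z}\}$ requires a dedicated argument. The natural attack exploits \eqref{eq:putinar_XN_backward_2t}, which (via non-negativity of the quadratic module on compact $\mathbf{X}_2$) forces $V_{1,z}(x)\leq H(x)$ pointwise on $\mathbf{X}_2$, combined with a pointwise lower bound implied by \eqref{eq:putinar_S_backward_2t} and compactness of $\mathbf{C}_1$. A two-sided pointwise bound of a polynomial of bounded degree on a compact set with non-empty interior translates into a bound on its coefficients in any fixed monomial basis, yielding the required compactness. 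Without this step the subsequence extraction of cuts fails and the contradiction argument cannot be closed.
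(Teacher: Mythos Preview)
Your overall strategy—monotone convergence of $\theta_{LB,z}$, compactness, and a contradiction on the limit gap—is sound and close in spirit to classical Benders-type convergence arguments, but the paper takes a genuinely different route that sidesteps precisely the obstacle you flag.

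Instead of extracting a convergent subsequence of the cut vectors $\hat{\mathbf{V}}_{1,z}$, the paper introduces an auxiliary scalar function $y^*_{\mathbf{x}_1,z}$, defined for each fixed state-moment vector $\mathbf{x}_1$ as the minimum of $L_{\mathbf{q}_1}(y)$ over $\mathbf{q}_1\in\tilde{\mathbf{Y}}_{1,z}$ with state moments matching $\mathbf{x}_1$. For every $\mathbf{x}_1$ this sequence is monotone nondecreasing in $z$ (because $\tilde{\mathbf{Y}}_{1,z}$ is nested) and bounded above by $\overline{y}$, hence has a pointwise limit $y^*_{\mathbf{x}_1,\infty}$. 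The diagonal-entry argument from Lemma~\ref{lemma:tightening} then gives $\langle\hat{\mathbf{V}}_{1,z},\hat{\mathbf{q}}_{1,z}\rangle\leq y^*_{\hat{\mathbf{x}}_1,\infty}$ for all $z$, because the cut generated at iteration $z$ is itself absorbed into the limit epigraph. Since $L_{\hat{\mathbf{q}}_{1,i}}(y)=y^*_{\hat{\mathbf{x}}_1,i}$ by optimality of the forward step, the chain $\theta_{LB,i}=L_{\hat{\mathbf{m}}_{0,i}}(l_0)+y^*_{\hat{\mathbf{x}}_1,i}\to L_{\hat{\mathbf{m}}_{0,i}}(l_0)+y^*_{\hat{\mathbf{x}}_1,\infty}\geq \rho_{UB,i}$ closes the gap without ever invoking boundedness of $\hat{\mathbf{V}}_{1,z}$. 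What this buys is exactly the avoidance of your obstacle: the paper works only with scalar quantities that are monotone and bounded by construction.

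Regarding your proposed resolution of the obstacle: the upper bound $V_{1,z}\leq H$ on $\mathbf{X}_2$ from \eqref{eq:putinar_XN_backward_2t} is correct, but the lower bound you claim from \eqref{eq:putinar_S_backward_2t} is not evident—that constraint gives $V_{1,z}(x)\leq l_1(x,u)+V_{1,z}(f_1(x,u))$, which is another \emph{upper} bound once you substitute $V_{1,z}\leq H$ on $\mathbf{X}_2$. More seriously, $\mathbf{X}_1$ and $\mathbf{X}_2$ are slices $\{x_c=1\}$ and $\{x_c=2\}$ with empty interior in the full ambient space that includes the time-index coordinate $x_c$, so pointwise bounds on these slices do not control coefficients of monomials involving, say, $(x_c-1)(x_c-2)$. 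Your compactness argument for the cuts therefore has a genuine gap as written; the paper's monotone auxiliary function is the cleaner way through.
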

\begin{proof}
	
	Let $\{\rho_{UB,z}\}$ and $\{\theta_{LB,z}\}$ be sequences over $z$ iterations. Assumption \ref{as:polynomialcompact} (continuity and compactness) implies that the sequences $\{\rho_{UB,z}\}$ and $\{\theta_{LB,z}\}$ are bounded. From Lemma \ref{lemma:tightening}, $\{\theta_{LB,z}\}$ is a monotonically increasing sequence. By the monotone convergence theorem, $\{\theta_{LB,z}\}$ converges to some accumulation point $\hat{\theta}_{LB}$. By the Bolzano-Weierstrass theorem, there is a subsequence $\{\rho_{UB,i}\}$ that converges to an accumulation point $\hat{\rho}_{UB}$. Every subsequence of a convergent sequence is also convergent, so we have $\lim_{i\rightarrow\infty}\theta_{LB,i} = \hat{\theta}_{LB}$.
    
Let $\tilde{\mathbf{X}}_{1}:=\{\mathbf{{x}}_{1}\in\mathbb{R}^{n_\mathbf{x}}\,\,:\, M_k(\mathbf{x}_1)\succeq 0;M_{k-d_{g_{j,1}}}(g_{j,1}\mathbf{x}_{1})\succeq0,j=1,\ldots,N_{g_x}\}$ be the relaxed state space, where $\mathbf{x}_{1}$ is defined in the same manner as $\mathbf{q}_1$ but without the epigraph variable $y$. For any state moment vector $\mathbf{x}_{1} \in \tilde{\mathbf{X}}_1$, a sequence $\{y^*_{\mathbf{x}_{1},z}\}$ can be constructed by solving the following optimization problem at each iteration $z$:
\begin{subequations}\label{eq:relaxed_min}
		\begin{align}
		y^*_{\mathbf{x}_{1},z}:=\min_{\mathbf{q}_1}\enskip &L_{\mathbf{q}_1}(y)\\
		\text{s.t.}\quad &\mathbf{q}_1\in\tilde{\mathbf{Y}}_{1,z} \label{eq:relaxed_min_yc} \\
		&q_1^{\alpha 0}={x}_{1}^{\alpha},\quad\alpha \in \mathbb{N}^{n_x},\sum_{i=1}^{n_x} \alpha_{i} \leq \lfloor 2k/\kappa_1\rfloor.
		\end{align}
	\end{subequations}
	In words, $y^*_{\mathbf{x}_{1},z}$ is the relaxed epigraph value evaluated for the state moments ${x}_{1}^{\alpha}$ with respect to the relaxed epigraph set $\tilde{\mathbf{Y}}_{1,z}$.
For each $\mathbf{x}_{1}$ in $\tilde{\mathbf{X}}_1$, the sequence $\{y^*_{\mathbf{x}_{1},z}\}$ is monotonically increasing (see Lemma \ref{lemma:tightening}) and bounded, and thus by the monotone convergence theorem, the limit $\{y^*_{\mathbf{x}_{1},z}\} \rightarrow y^*_{\mathbf{x}_{1},\infty}$ always exists. At no iteration $z$ of the algorithm can the backward recursion generate another $\hat{V}_{1,z}(x)$ such that $\langle\hat{\mathbf{V}}_{1,z},\mathbf{q}_{1,z}\rangle >y^*_{\mathbf{x}_{1,\infty}}$, where we choose $\mathbf{x}_{1}$ to have the same state moments as $\mathbf{q}_{1,z}$. This implies that 
\begin{equation}\label{eq:upperboundbounded}
	\lim_{i\rightarrow\infty}{L}_{\hat{\mathbf{m}}_{0,i}}(l_0)+{L}_{\hat{\mathbf{q}}_{1,i}}(y)= \lim_{i\rightarrow\infty}{L}_{\hat{\mathbf{m}}_{0,i}}(l_0)+y^*_{\hat{\mathbf{x}}_{1},i}\geq \lim_{i\rightarrow\infty}{L}_{\hat{\mathbf{m}}_{0,i}}(l_0)+\langle\hat{\mathbf{V}}_{1,i},\hat{\mathbf{q}}_{1,i}\rangle=\hat{\rho}_{UB}.
\end{equation}

	As long as $\theta_{LB,i} \leq \rho_{UB,i} - \epsilon$, that is, the termination criterion has not yet been satisfied, relation \eqref{eq:upperboundbounded} implies that the subsequence $\{\rho_{UB,i}\}$ must also converge to $\hat{\theta}_{LB}$. Thus, by virtue of Lemmas \ref{lemma:upperboundbound} and \ref{lemma:lowerboundbound}, we obtain $\rho_k^* \leq \hat{\rho}_{UB} = \hat{\theta}_{LB} \leq \rho^*$.
	
	Given $\epsilon /2 >0$, by the definition of a convergent sequence, there exists $Z\in\mathbb{N}^+$ and $I\in\mathbb{N}^+$ such that $|\theta_{LB,z}-\hat{\theta}_{LB}|<\epsilon/2\enskip \text{if}\enskip z>Z$ and $|\rho_{UB,i}-\hat{\theta}_{LB}|<\epsilon/2\enskip \text{if}\enskip i>I$. Thus there exists $J\in\mathbb{N}^+$ such that $|\rho_{UB,z}-\theta_{LB,z}|\leq |\rho_{UB,z}-\hat{\theta}_{LB}|+ |\theta_{LB,z}-\hat{\theta}_{LB}|<\epsilon\enskip \text{if}\enskip z>J$.
\end{proof}

Based on Lemma \ref{re:momentconv}, we can state that the higher the relaxation degree $2k$, the closer the undecomposed moment relaxation (\ref{eq:undecomposedsdp}) and therefore $\rho^*_k$ to the true optimal value $\rho ^*$ of the original \ac{GMP} \eqref{eq:gmp} with $T=2$, since problem (\ref{eq:undecomposedsdp}) becomes an ever tighter relaxation of \eqref{eq:gmp}.

The extension of the convergence properties to the case of multiple stages can be inferred by backward induction. If we add one new stage before the two-stage problem, the original two-stage problem (\ref{eq:undecomposedsdp}) can be seen as the nested second stage of a new upper-level two-stage problem. The nested second stage converges according to Theorem \ref{th:convergence} for given initial moments generated by the first stage. We can then apply the same arguments used for the nested problem to show the convergence of the new upper-level two-stage problem.

\section{Numerical results}\label{numerics}

We evaluate the algorithm using a real-world long-term borehole storage problem. The Moment \ac{DDP} approach developed in Section \ref{MomentSOS} is compared with the \ac{DP} approach using discretization of the state/action space for the case of a small storage system in Section \ref{smallexample}. The convergence of the algorithm for a larger problem with multiple storage systems is then shown in Section \ref{largesystem}.
\subsection{Single storage system}\label{smallexample}
We consider the system pictured in Fig.~\ref{fig:setup}, similar to the setup in \cite{DeRidder2011}, comprising a borehole, a \ac{HP}, a chiller and a boiler. The objective is to satisfy the heating and cooling demand, which vary by time of year, at minimum annual cost. Heating can be supplied either by the boiler or by the \ac{HP} that draws energy from the borehole. The efficiency of the \ac{HP} depends on the outlet temperature of the borehole. The cooling demand can be satisfied by either running the chiller or by charging the borehole through a heat-exchanger. 

\begin{figure}
	\centering
	\includegraphics[scale=.25]{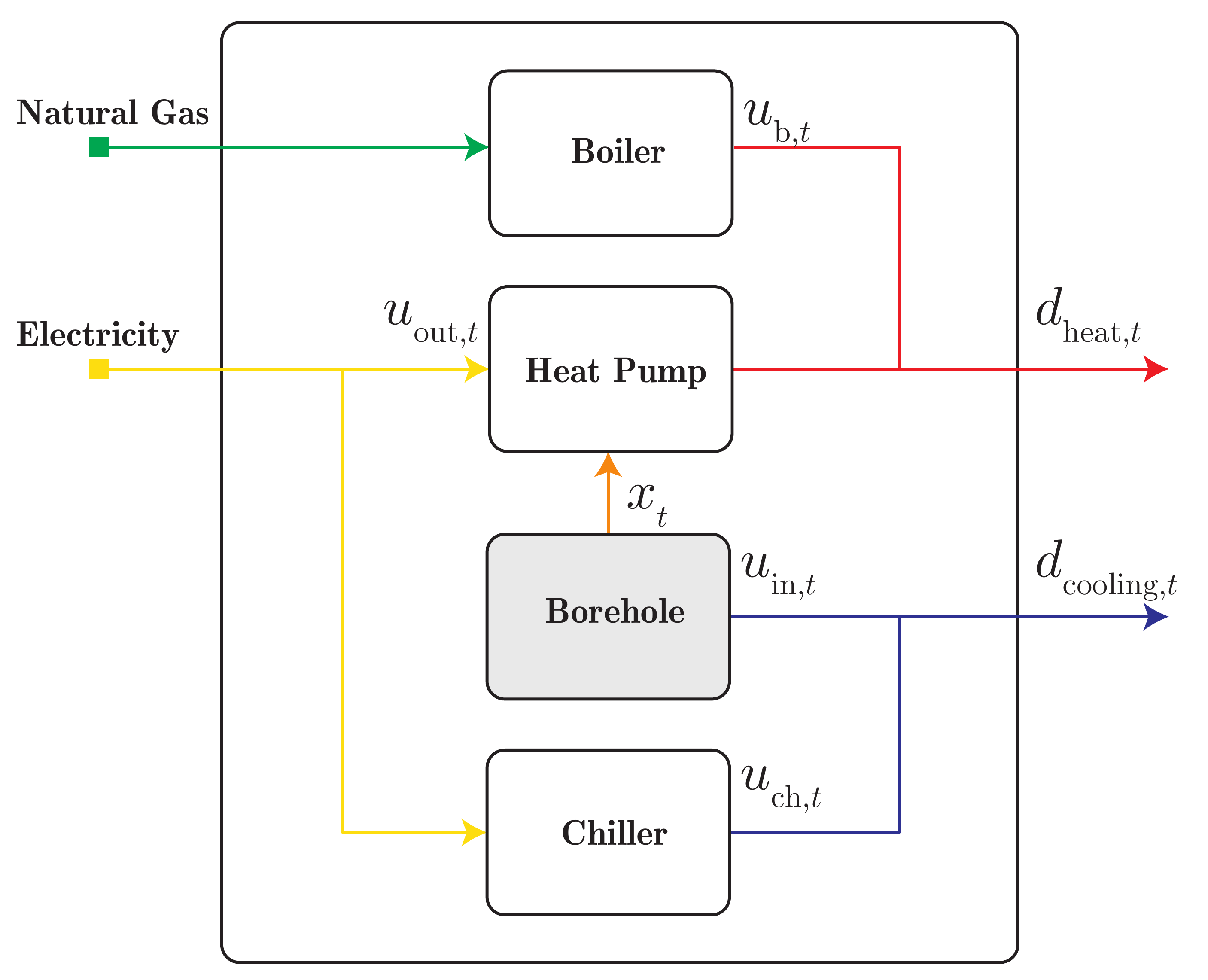}
	\caption[Schematic of the energy system with borehole storage]{Schematic of the energy system with borehole storage}\label{fig:setup}
\end{figure}

This system is sufficiently small for the \ac{DP} approach using discretization to be tractable. We evaluate the quality of the approximate value function generated by the Moment \ac{DDP} approach, as well as the quality of the solution when the approximate value functions are used in a single-stage optimal control problem in comparison with the discretized \ac{DP} solution. We assume the heating and cooling demand to be given and use measurements from the Empa Campus in D\"ubendorf Switzerland (Fig.~\ref{fig:heatingdemand}\if\thesismode0 in the Appendix\fi) scaled for a single storage application. The characteristics of the ground borehole are derived from a thermal response test conducted on the Empa campus. The long-term \ac{ESMP} over the horizon of one year is formulated as follows:
\begin{subequations}\label{eq:boreholenonconvex}
	\begin{align}
	&\min_{\{x_{t}\}_{t=1}^T,\{u_{\textrm{in},t},u_{\textrm{out},t}, u_{\textrm{b},t},u_{\textrm{ch},t}\}_{t=0}^{T-1}}\sum_{t=0}^{T-1} c_e (u_{\textrm{out},t}+u_{\textrm{ch},t})+c_\textrm{g} u_{\textrm{b},t}\\
	&\text{s.t.}\enskip\; x_{t+1}=x_t+\Delta t \frac{1}{mc}(\lambda(x_t-T_{\infty})-a(x_t) u_{\textrm{out},t}+u_{\textrm{in},t}),\quad t=0,\ldots,T-1,\label{eq:boreholedynamic}\\
	&\quad\quad a(x_t) u_{\textrm{out},t} + a_\textrm{b} u_{\textrm{b},t} = d_{\textrm{heat},t},\quad t=0,\ldots,T-1,\label{eq:heating_balance}\\
	&\quad\quad u_{\textrm{in},t}+a_{\textrm{ch}} u_{\textrm{ch},t} = d_{\textrm{cooling},t},\quad t=0,\ldots,T-1,\label{eq:cooling_balance}\\
	&\quad\quad\underline{T} \leq x_t \leq \overline{T}, \quad t=1,\ldots,T,\\
	&\quad\quad 0 \leq u_{\textrm{out},t} \leq \overline{u}_{\textrm{out}};\enskip 0 \leq u_{\textrm{in},t} \leq \overline{u}_{in},0 \leq u_{\textrm{b},t}  \leq \overline{u}_{\textrm{b}};\enskip 0 \leq u_{\textrm{ch},t}  \leq \overline{u}_{\textrm{ch}},\quad t=0,\ldots,T-1,
	\end{align}
\end{subequations}
where $x_t$ is the ground temperature, $u_{\textrm{in},t}$ the storage charge, $u_{\textrm{out},t}$ the \ac{HP} power when drawing energy from the ground, $u_{\textrm{ch},t}$ the chiller power and $u_{\textrm{b},t}$ the boiler power. The heating and cooling demands are denoted as $d_{\textrm{heat},t}$ and $d_{\textrm{cooling},t}$. The power rating limits are denoted by $\overline{u}_{\textrm{out}}$, $\overline{u}_{in}$, $\overline{u}_{\textrm{ch}}$ and $\overline{u}_{\textrm{b}}$. The temperature of the borehole $x_t$ is specified to remain within $[\underline{T},\overline{T}]$. $T_{\infty}$ denotes the boundary ground temperature, $\lambda$ the thermal conductivity  and $mc$ the thermal inertia of the ground. If ground temperatures are not available for measurement, the model provided in \cite{atam2016} can be used instead. We set $T=12$ to obtain monthly value functions, leading to $\Delta t=730$ hours for (\ref{eq:boreholedynamic}). A linear function $a(x_t)$ was fitted to the measurements of the \ac{COP} of the \ac{HP} in the Energy Hub of the NEST building on the Empa Campus (see Fig.~\ref{fig:cop} \if\thesismode0in the Appendix \fi). The third column of Table~\ref{table:inputdata} \if\thesismode0in the Appendix \fi summarizes all the numerical energy system data for (\ref{eq:boreholenonconvex}). Due to the temperature-dependent \ac{COP}, the storage problem (\ref{eq:boreholenonconvex}) is non-convex. After eliminating decision variables $u_{\textrm{b},t}$ and $u_{\textrm{ch},t}$ using the equality constraints (\ref{eq:heating_balance}) and (\ref{eq:cooling_balance}), the problem has one state $x_t$ and two control input decision variables $u_{\textrm{in},t}$ and $u_{\textrm{out},t}$.

\if\thesismode1
\begin{figure}
	  \centering
      \includegraphics[scale=1]{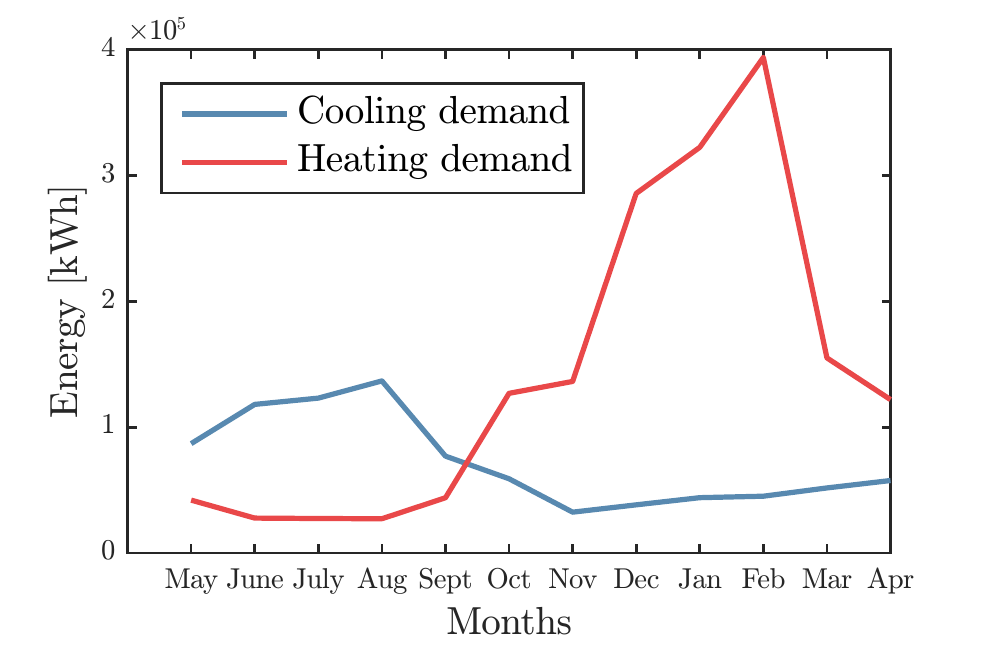}
      \caption[Heating and cooling demand of the Empa Campus]{Heating and cooling demand $d_{\textrm{heat},t}$ and $d_{\textrm{cooling},t}$ of the single storage application over a year}\label{fig:heatingdemand}
\end{figure}
\begin{figure}
	\centering
	\includegraphics[scale=1]{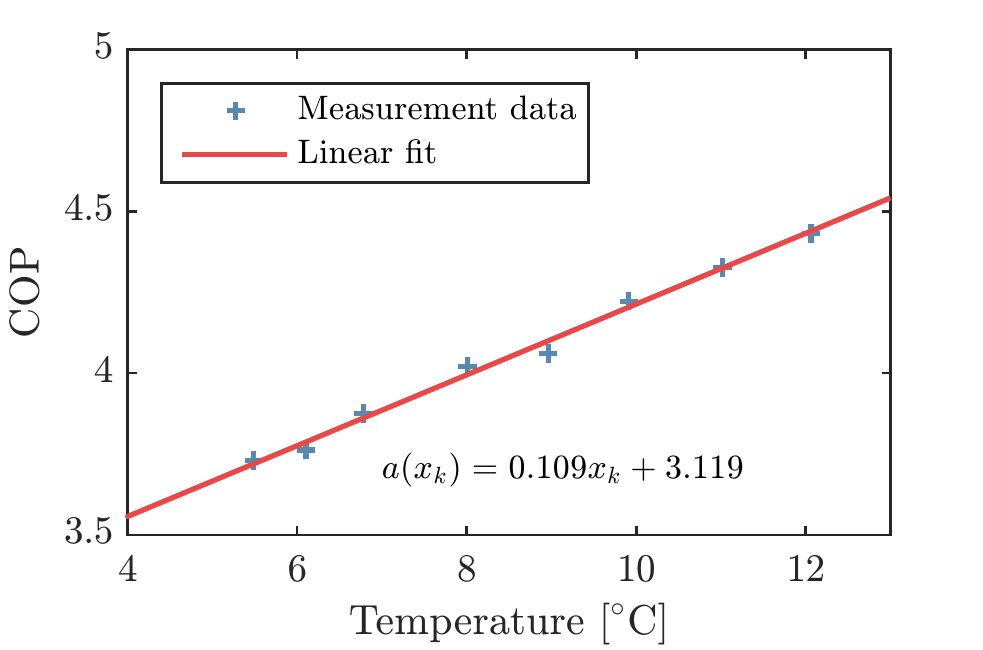}
	\caption[Fitting of the temperature-dependent COP]{Fitting of the inlet temperature-dependent \ac{COP} $a(x_t)$ of the \ac{HP} }\label{fig:cop}
\end{figure}
\begin{table}
	\caption[Energy system data]{Energy system data}
	\label{table:inputdata}
	\begin{center}
		\begin{tabular}{ l l l l}
			& Parameter &Single storage&Multiple storage\\ 
			\midrule
			\textbf{Grid Feeders} \\ Power & Cost $c_e$:& 0.096\$/kWh &0.096\$/kWh\\
			\midrule
			Gas & Cost $c_g$:& 0.063\$/kWh &0.063\$/kWh\\
			\midrule
			\textbf{Conversion} \\ \ac{HP}s& \ac{COP} $a(x_t)$:& see Fig.~\ref{fig:cop}&see Fig.~\ref{fig:cop}\\& Capacity $\overline{u}_{\rm out}$:& 60kW & 60kW \\
			\midrule
			Boiler& Efficiency $a_{\textrm{b}}$:& 0.7&0.7\\ & Capacity $\overline{u}_{b}$:& 285 kW & 855 kW\\
			\midrule
			Chiller& \ac{COP} $a_{\rm ch}$:& 5&5\\
			&  Capacity $\overline{u}_{\rm ch}$: &150kW&450kW\\
			\midrule
			\textbf{Storage} \\
			Boreholes&Conductivity $\lambda$: & 0.621kW/$^\circ$C&0.621kW/$^\circ$C$\pm 10\%$\\
			& Inertia $mc$: &14805kWh/$^\circ$C&14805kWh/$^\circ$C\\
			& Capacity $\overline{u}_{in}$:&100kW&100kW\\
			& Ground $T_\infty$:&12$^\circ$C &12$^\circ$C\\
			& Range $[\underline{T},\overline{T}]$: &[0,12]$^\circ$C&[0,12]$^\circ$C\\
			\bottomrule
		\end{tabular}
	\end{center}
\end{table}
\fi
The following value function approximations are considered to solve (\ref{eq:boreholenonconvex}):
\begin{itemize}
	\item Discretized dynamic programming with $41$ state grid points on $[\underline{T},\overline{T}]$ and $1001$ grid points  per control input on $[0,\overline{u}]$.
	\item  Moment \ac{DDP} with relaxation degree $2k=2$; this restricts the value function approximation to affine functions. (Recall that the maximum degree of the polynomial approximation of the value function is constrained by ${\rm deg}(V_{t,z})\kappa_t \leq 2k$, where in this case the highest polynomial degree found in the dynamics is $\kappa_t=2$.)
	\item  Moment \ac{DDP} with relaxation degree $2k=4$; this permits quadratic value function approximations, however in this case we add constraints to restrict all quadratic terms to zero. As a result, only affine function approximations are used.\footnote{For consistency, the primal problem over moments also has to be modified (relaxed) by removing some linear equality constraints on higher-order moments arising from the dynamics \eqref{eq:primalsdpdyn_f}. For brevity we do not detail this procedure here.}
	\item  Moment \ac{DDP} with relaxation degree $2k=4$; using the full quadratic value function approximations permitted by this relaxation degree.
\end{itemize}
The Moment \ac{DDP} approach is implemented using YALMIP \citep{Lofberg2004} and solved with MOSEK\textsuperscript{TM}. The discretized \ac{DP} problem is implemented and solved using the \emph{dpm} toolbox of \cite{Sundstrom2009} and MATLAB\textsuperscript{TM}. The problem data are scaled to be contained in the unit box to improve the numerical performance of the Moment \ac{DDP} approach.

First, we compare the accuracy of different value function bases for a uniform initial state distribution. \if\thesismode1 In Figs.~\ref{fig:valueapprox1} (May to October) and \ref{fig:valueapprox2} (November to April) \else In Fig.~\ref{fig:valueapprox}\fi, the approximate value functions are shown together with the reference computed by discretized \ac{DP}. The kinks in the \ac{DP} value functions for the months of May to August are caused by the additional cost incurred by using the chiller if the storage temperature is too high for cooling. The kinks in March and April are due to two different operating modes: using the \ac{HP} to provide heat or both, the \ac{HP} and the boiler. Affine and quadratic approximate value functions generated by relaxation $2k=4$ are a close fit for most months. For the months May to September, the lower sections of the approximate value functions are less accurate. Whereas the slopes of the approximate functions are very close to discretized \ac{DP} reference, the kink positions are not. However, as subsequent results on the performance of the resulting control policy demonstrate, using the borehole to provide cooling is still optimal. There is a considerable difference between the discretized \ac{DP} and the piecewise affine value function generated by the relaxation of order $2k=2$.
\if\thesismode1
\begin{figure}
	\centering
	\includegraphics[trim={0 1cm 0 0},clip,scale=1]{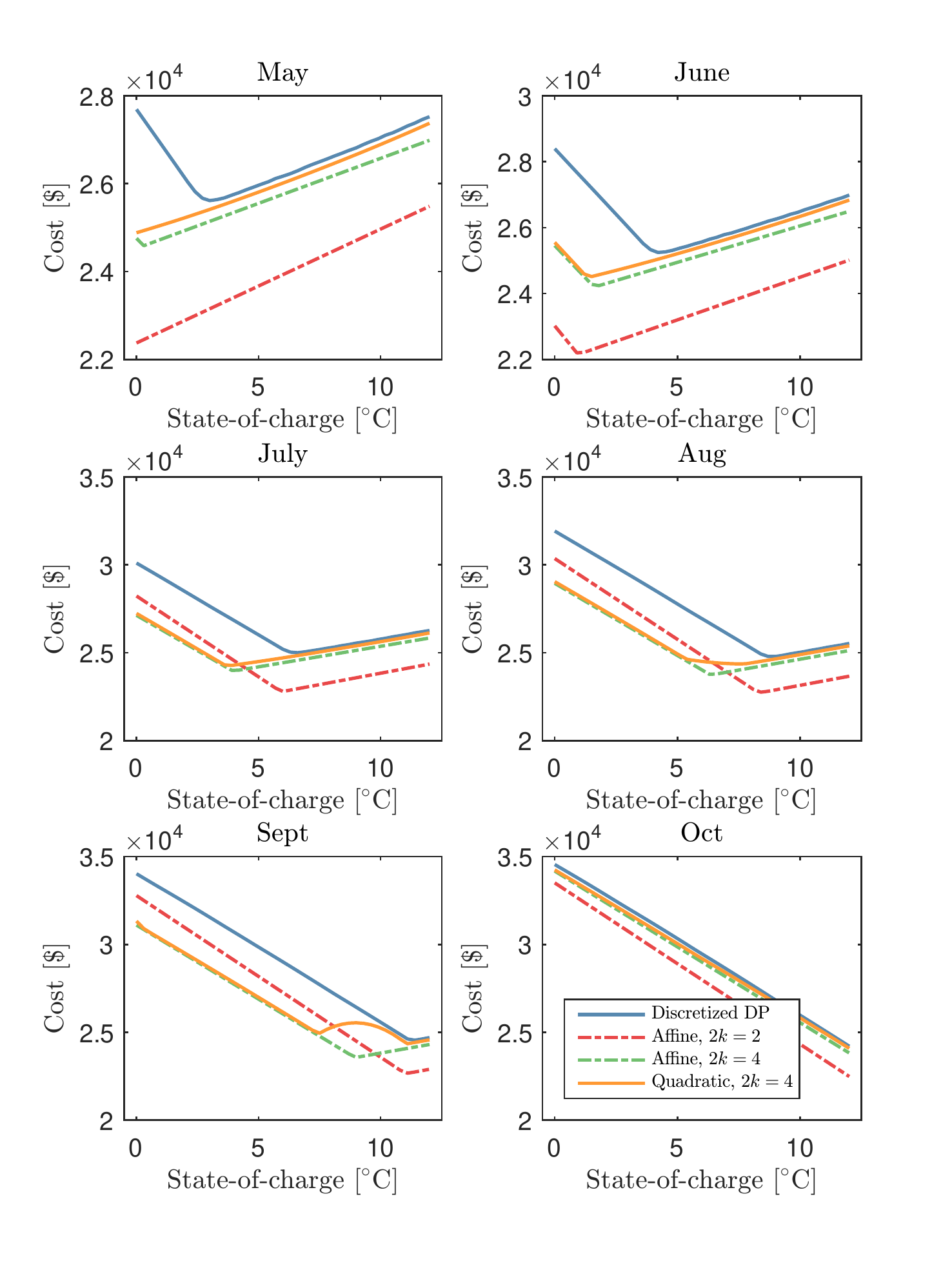}
	\caption[Value function approximations for the months of May to October]{Value function approximations for the months of May to October using different basis functions in comparison to discretized \ac{DP}}\label{fig:valueapprox1}
\end{figure}
\begin{figure}
	\centering
	\includegraphics[trim={0 1cm 0 0},clip,scale=1]{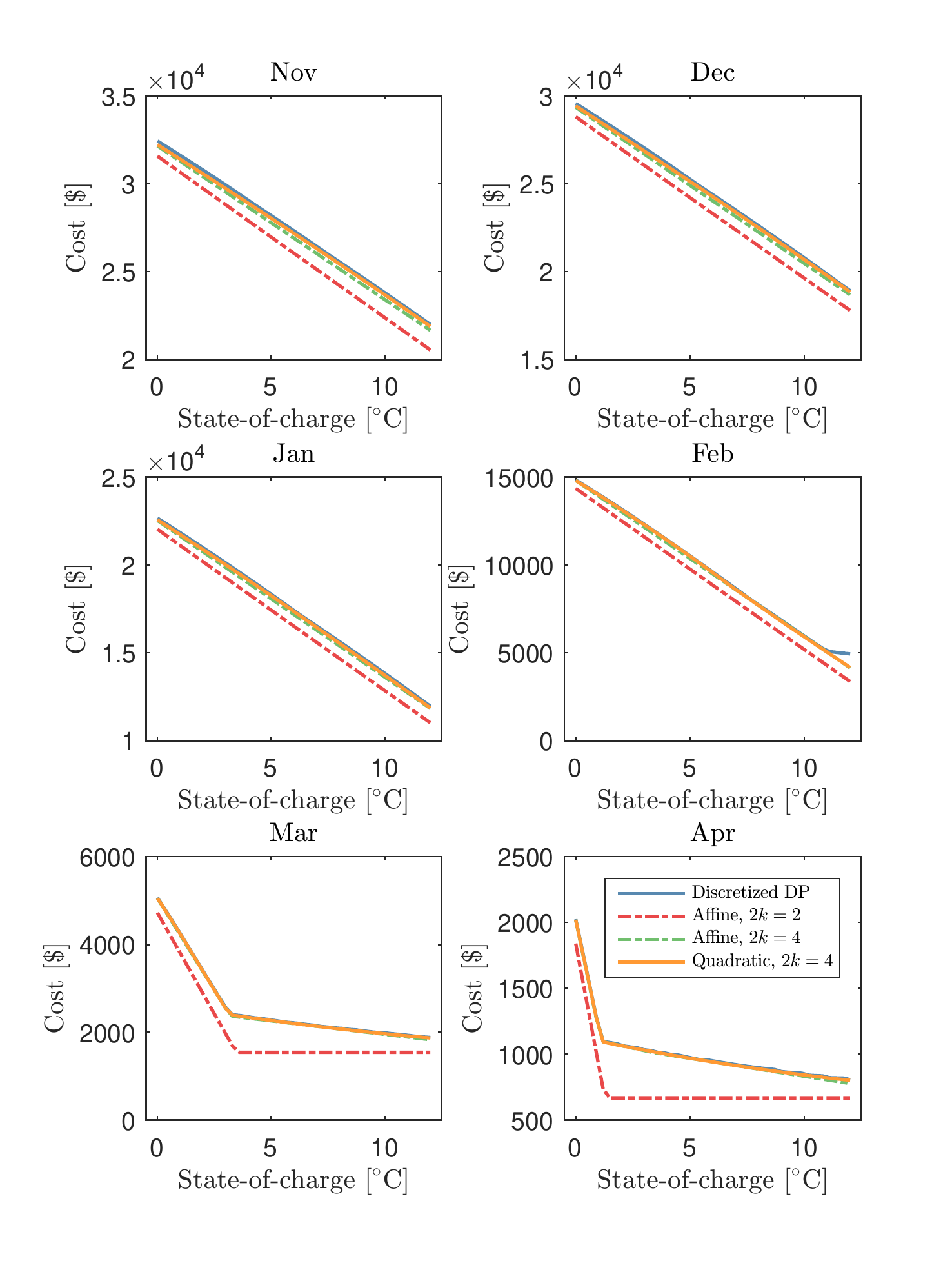}
	\caption[Value function approximations for the months of November to April]{Value function approximations for the months of November to April using different basis functions in comparison to discretized \ac{DP}}\label{fig:valueapprox2}
\end{figure}
\else
\begin{figure}
	\centering
	\includegraphics[trim={2cm 1.5cm 0cm 1cm},clip,scale=1]{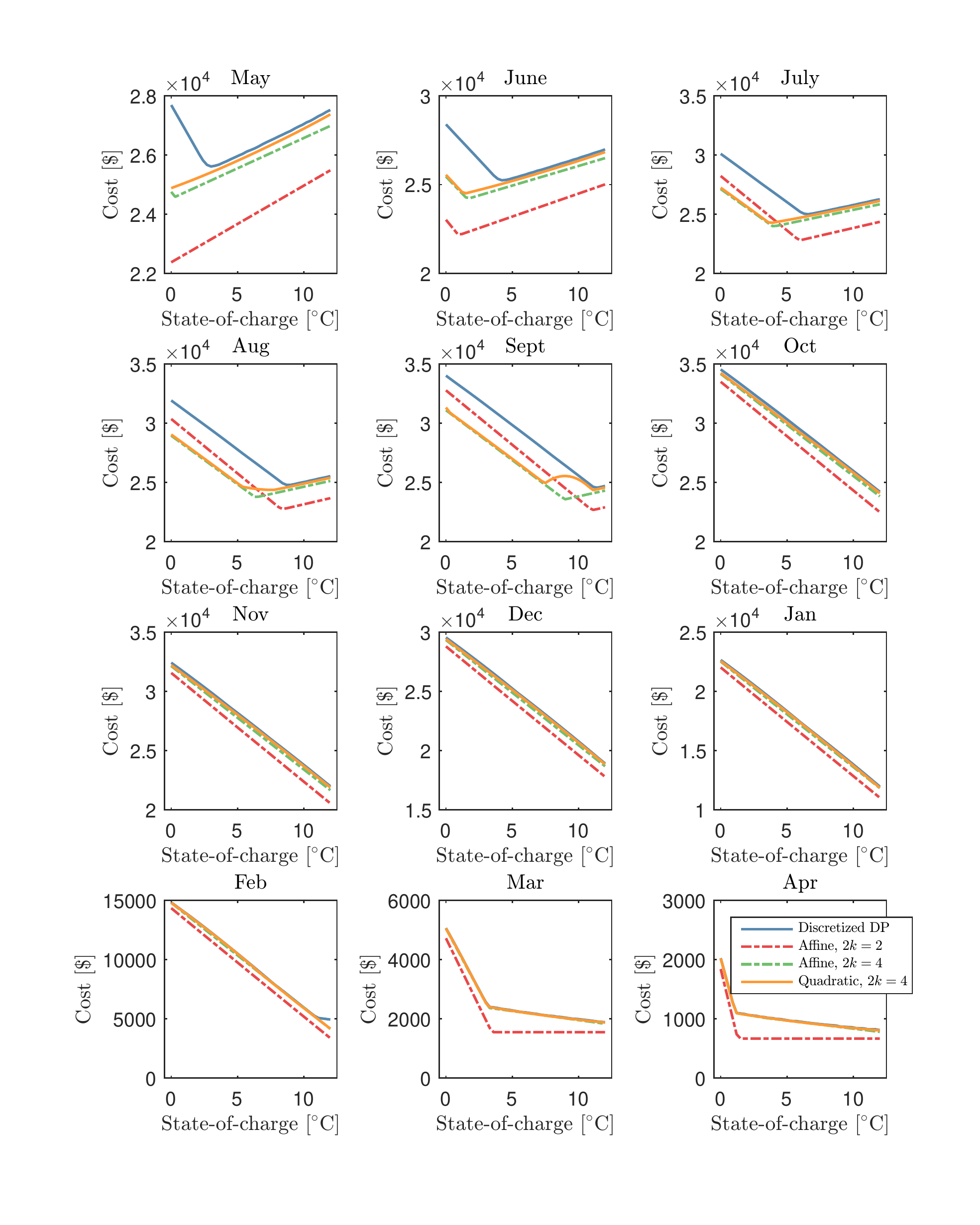}
	\caption{Value function approximations using different basis functions in comparison to discretized \ac{DP}.}\label{fig:valueapprox}
    \end{figure}
\fi

The convergence of the lower bound $\rho_{LB}$ and the upper bound $\rho_{UB}$ of the Moment \ac{DDP} algorithm for different polynomial basis functions is shown in Fig.~\ref{fig:convergence}. Affine basis functions make the Moment \ac{DDP} algorithm converge faster than quadratic basis functions. The total solver times for a predefined convergence tolerance are reported in Table~\ref{table:times} \if\thesismode0in the Appendix \fi. Note that as in conventional \ac{DDP}, problems (\ref{eq:sos_backward}) and (\ref{eq:sdp_forward}) increase slightly in size at every iteration as we add additional under-approximating value functions.
\if\thesismode1
\begin{figure}
	\centering
	\includegraphics[trim={0 1cm 0 0},clip,scale=1]{convergence}
	\caption[Single storage system: Convergence of the lower bound and the upper bound]{Single storage system: Convergence of the lower bound $\rho_{LB}$ and the upper bound $\rho_{UB}$ of the Moment \ac{DDP} algorithm for different basis functions}\label{fig:convergence}
\end{figure}
\begin{table} 
	\caption[Accumulated solver time over all iterations of the Moment DPP approach]{Accumulated MOSEK\textsuperscript{TM} solver time over all iterations of the Moment DDP approach obtained on a PC with an Intel-i5 2.2GHz CPU with 8GB RAM for a tolerance of $\epsilon=10^{-4}$ (after scaling the problem data to the unit box)}\label{table:times}
	\begin{center}
		\begin{tabular}{ l l l}
			Basis functions/Relaxation&Single storage&Multiple storage\\ 
			\midrule
			Affine value functions, $2k=2$&4.77s&6.39s\\
			Affine value functions, $2k=4$&5.77s&18.65min\\
			Quadratic value functions, $2k=4$&25.23s&28.24min\\
			\midrule
		\end{tabular}
	\end{center}
\end{table}
\else
\begin{figure}
\centering
\begin{minipage}[c]{0.45\textwidth}
	\centering
	\includegraphics[trim={0 1cm 0 0},clip,scale=1]{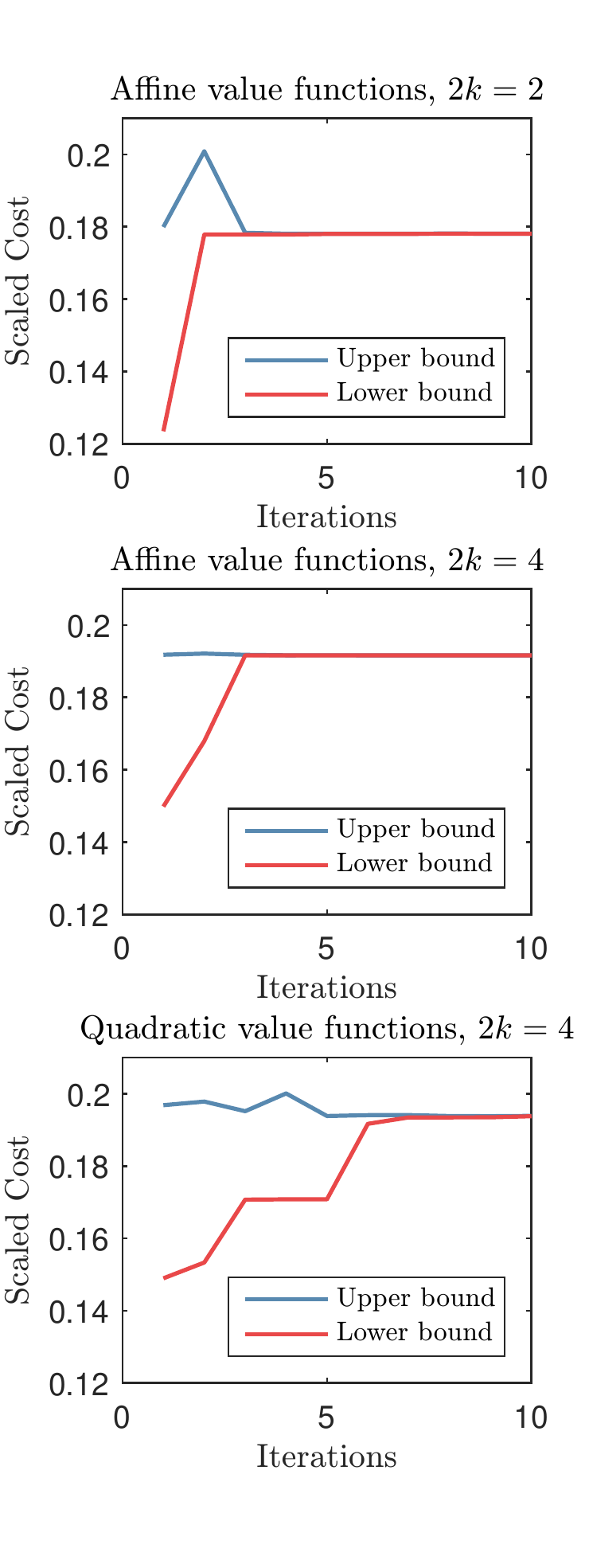}
	\caption{Single storage system: Convergence of the lower bound $\rho_{LB}$ and the upper bound $\rho_{UB}$ of the Moment \ac{DDP} algorithm for different basis functions}\label{fig:convergence}
\end{minipage}
\hspace{0.5cm}
\begin{minipage}[c]{0.45\textwidth}
	\centering
	\includegraphics[trim={.8cm 1cm 0 0},clip,scale=1]{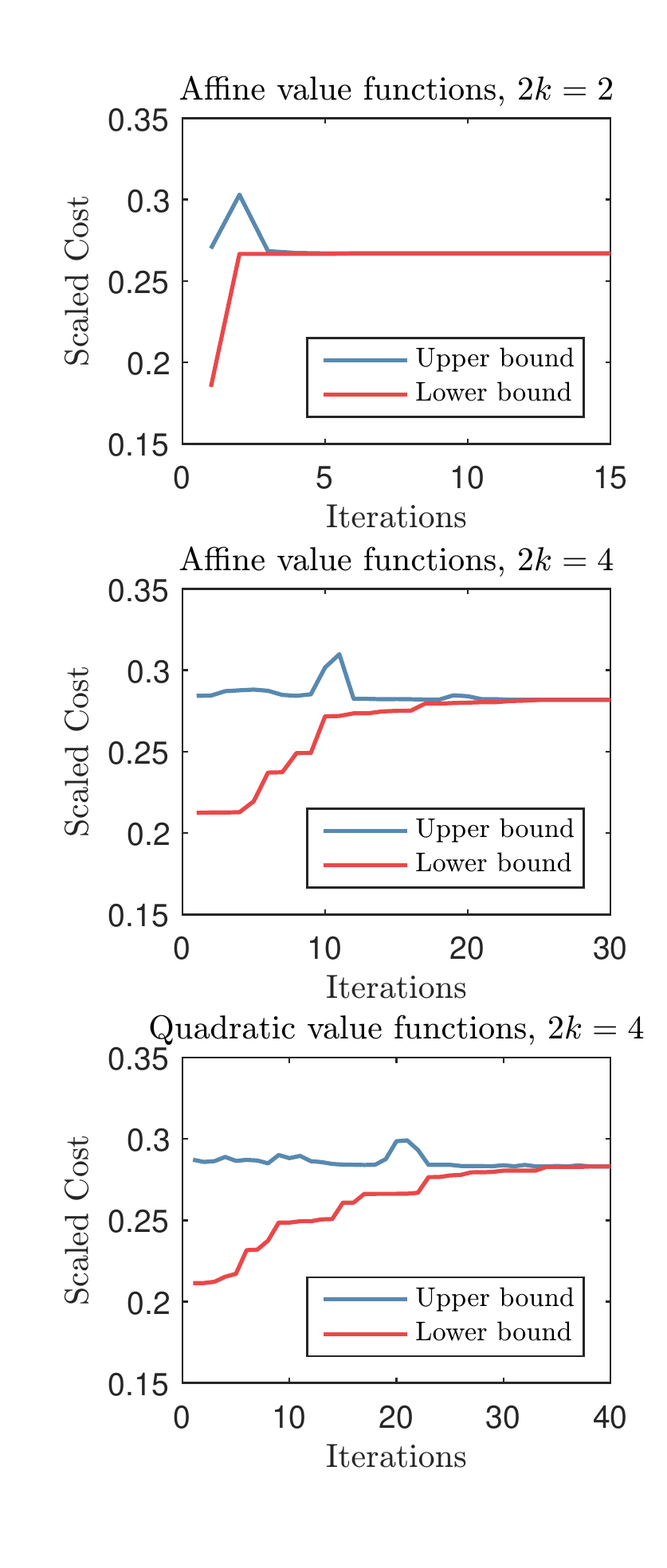}
	\caption{Multiple storage systems: Convergence of the lower bound $\rho_{LB}$ and the upper bound $\rho_{UB}$ of the  Moment \ac{DDP} algorithm for different basis functions}\label{fig:convergence3d}
\end{minipage}
\end{figure}
\fi

Finally, instead of (\ref{eq:boreholenonconvex}), we solve a sequence of single-stage problems augmented with approximate value functions obtained by the Moment \ac{DDP} approach. For each month $t\in\{1,\ldots,12\}$, we solve:
\begin{subequations}\label{eq:boreholenonconvex_shortterm}
	\begin{align}
	&\min_{x_{t+1}, u_{\textrm{in},t},u_{\textrm{out},t}, u_{\textrm{b},t},u_{\textrm{ch},t}} c_e (u_{\textrm{out},t}+u_{\textrm{ch},t})+c_\textrm{g} u_{\textrm{b},t}\nonumber\\
	&\quad\quad\quad\quad\quad\quad\quad\quad\quad\quad+\max\{V_{t+1,0}(x_{t+1}),\ldots,V_{t+1,z}(x_{t+1})\}\\
	&\text{s.t.}\enskip\; x_{t+1}=x_t+\Delta t \frac{1}{mc}(\lambda(x_t-T_{\infty})-a(x_t) u_{\textrm{out},t}+u_{\textrm{in},t})\\
	&\quad\quad a(x_t) u_{\textrm{out},t} + a_\textrm{b} u_{\textrm{b},t} = d_{\textrm{heat},t},\\
	&\quad\quad u_{\textrm{in},t}+a_{\textrm{ch}} u_{\textrm{ch},t} = d_{\textrm{cooling},t},\\
	&\quad\quad\underline{T} \leq x_{t+1} \leq \overline{T},\\
	&\quad\quad 0 \leq u_{\textrm{out},t} \leq \overline{u}_{\textrm{out}};\\
	&\quad\quad 0 \leq u_{\textrm{in},t} \leq \overline{u}_{in};\enskip 0 \leq u_{\textrm{b},t}  \leq \overline{u}_{\textrm{b}};\enskip 0 \leq u_{\textrm{ch},t}  \leq \overline{u}_{\textrm{ch}}
	\end{align}
\end{subequations}

The start of the storage cycle is assumed to be the beginning of May because the cooling overcomes the heating demand during this period (see Fig.~\ref{fig:heatingdemand}). In Fig.~\ref{fig:shortterm}, we show the total cost of operating the system over the full horizon for a uniformly distributed number of initial states when each month is solved as a single-stage problem (\ref{eq:boreholenonconvex_shortterm}). We use the generic nonlinear solver IPOPT \citep{Wachter2006} to compute a locally-optimal solution. The affine value functions perform almost as well as the forward simulation of discretized \ac{DP}. The quadratic value functions lead to sub-optimal results with a local optimization algorithm for some initial states. This might be due to the non-convexity of the approximate value function in September (see Fig.~\if\thesismode1\ref{fig:valueapprox1}\else\ref{fig:valueapprox}\fi).
\begin{figure}
	\centering
	\includegraphics[scale=1]{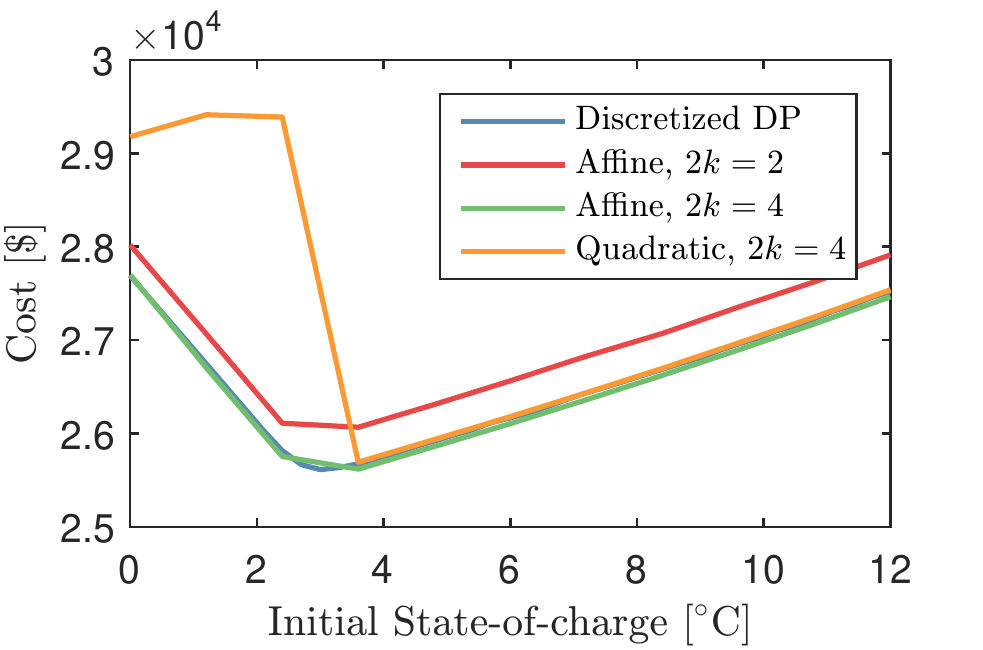}
	\caption[Cost over the full year starting in May for a sequence of single-stage problems]{Cost over the full year starting in May for a sequence of single-stage problems with different approximate functions in comparison to discretized \ac{DP}}\label{fig:shortterm}
\end{figure}
\subsection{Multiple storage systems}\label{largesystem}
We now evaluate the convergence of the Moment \ac{DDP} approach for a higher dimensional problem, namely an \ac{ESMP} with three different storage systems:
\begin{subequations}\label{eq:boreholenonconvex_large}
	\begin{align}
	&\min_{\{\{x_{i,t}\}_{t=1}^T,\{u_{\textrm{in},i,t},u_{\textrm{out},i,t}\}_{t=0}^{T-1}\}_{i=1}^3, \{u_{\textrm{b},t},u_{\textrm{ch},t}\}_{t=0}^{T-1}} \sum_{t=0}^{T-1} \Bigg(c_e u_{\textrm{ch},t}+c_g u_{\textrm{b},t}+\sum_{i=1}^3 c_e u_{\textrm{out},i,t}\Bigg)\\
	&\text{s.t.}\enskip\; x_{i,t+1}=x_{i,t}+\Delta t \frac{1}{mc}(\lambda_i(x_{i,t}-T_{\infty})+u_{\textrm{in},i,t}-a(x_{i,t}) u_{\textrm{out},i,t}),\nonumber\\
	&\quad\quad\quad\quad t=0,\ldots,T-1,i=1,2,3,\\
	&\quad\quad\sum_{i=1}^{3}a(x_{i,t}) u_{\textrm{out},i,t} + a_\textrm{b} u_{\textrm{b},t} = d_{\textrm{heat},t},
	\quad t=0,\ldots,T-1,\label{eq:heating_balance_l}\\
	&\quad\quad\sum_{i=1}^{3}u_{\text{in},i,t}+a_{\textrm{ch}} u_{\textrm{ch},t} = d_{\textrm{cooling},t}, \quad t=0,\ldots,T-1,\label{eq:cooling_balance_l}\\
	&\quad\quad\underline{T} \leq x_{i,t} \leq \overline{T}, \quad t=1,\ldots,T,i=1,2,3,\\
	&\quad\quad 0 \leq u_{\textrm{out},i,t} \leq \overline{u}_{\textrm{out}};\enskip 0 \leq u_{\textrm{in},i,t} \leq \overline{u}_{\textrm{in}},\medspace t=0,\ldots,T-1,i=1,2,3,\\
	&\quad\quad 0 \leq u_{\textrm{b},t}  \leq \overline{u}_{\textrm{b}};\enskip 0 \leq u_{\textrm{ch},t}  \leq \overline{u}_{\textrm{ch}}, \quad t=0,\ldots,T-1,
	\end{align}
\end{subequations}
With two additional boreholes, the discretized \ac{DP} approach memory requirements become excessive, since a grid must be spanned over a 9-dimensional decision space after elimination of the boiler and chiller variables using (\ref{eq:heating_balance_l}) and (\ref{eq:cooling_balance_l}). In addition to the energy system data of the fourth column of Table~\ref{table:inputdata}, we use the heating and cooling demand of the single storage example of the previous section multiplied by a factor 3 as input data. The convergence of affine and quadratic approximate value functions for a uniform initial state distribution is shown in Fig.~\ref{fig:convergence3d}. All methods converge in a reasonable number of iterations. Table~\ref{table:times} \if\thesismode0in the Appendix \fi reports the total solver times for a predefined tolerance.
\if\thesismode1
\begin{figure}
	\centering
	\includegraphics[trim={0 1cm 0 0},clip,scale=1]{convergence3d}
	\caption[Multiple storage systems: Convergence of the lower bound and the upper bound]{Multiple storage systems: Convergence of the lower bound $\rho_{LB}$ and the upper bound $\rho_{UB}$ of the  Moment \ac{DDP} algorithm for different basis functions}\label{fig:convergence3d}
\end{figure}
\fi

\section{Conclusion and Future Work}\label{conclusion}
This paper presented a novel value function approximation scheme for nonlinear multi-stage problems that leverages sum-of-squares techniques within a \ac{DDP} framework. The scheme is based on a finite-horizon \ac{GMP} for discrete-time dynamical systems. The primal, a moment problem, and the dual, an \ac{SOS} program, are used iteratively to refine the statistics of the forward state trajectory and the approximate value functions respectively. Whereas \ac{DDP} returns value functions that apply locally around trajectories emanating from a single initial state, and generally only for linear system dynamics and cost, the Moment \ac{DDP} approach returns approximate value functions for a distribution of initial states, and moreover achieves this for systems with polynomial dynamics, costs, and constraints. Depending on the degree of polynomials used, the optimal policy obtained by short-term problems augmented with approximate value functions returned by the Moment \ac{DDP} approach can be almost as cost-effective as that obtained by discretized \ac{DP}. We also demonstrated convergence of the Moment \ac{DDP} approach for a case that is computationally too demanding for discretized \ac{DP}. 

The computational complexity of the Moment \ac{DDP} approach could be reduced by exploiting any sparsity present in the problem data in \eqref{eq:energystorage} \citep{Waki2006}. This would draw on the experience of \cite{Molzahn2015} and \cite{Ghaddar2014}, who successfully exploited the sparse structure of electrical networks to obtain global solutions to the nonlinear optimal power flow problem using moment relaxations. Alternative positivity certificates, such as the one proposed in \cite{Ahmadi2014a}, also offer the possibility of reduced computational complexity.

\section*{Acknowledgments}
We would like to thank Viktor Dorer, Roy Smith and Jan Carmeliet for their valuable help and support. We are also grateful to Xinyue Li for her work on the heat pump characterization and to Paul Beuchat, Georgios Darivianakis, Benjamin Flamm, Mohammad Khosravi, and Annika Eichler for fruitful discussions.
This research project is financially supported by the Swiss Innovation Agency Innosuisse and by NanoTera.ch under the project HeatReserves, and is part of the Swiss Competence Center for Energy Research SCCER FEEB\&D.

{
\begingroup
    \setstretch{0.7}
   	\bibliography{ThesisReferences.bib}

\begin{thebibliography}{39}
\expandafter\ifx\csname natexlab\endcsname\relax\def\natexlab#1{#1}\fi
\providecommand{\bibinfo}[2]{#2}
\ifx\xfnm\relax \def\xfnm[#1]{\unskip,\space#1}\fi
\bibitem[{Abgottspon(2015)}]{Abgottspon2015}
\bibinfo{author}{Abgottspon, H.} (\bibinfo{year}{2015}).
\newblock {\it \bibinfo{title}{{Hydro power planning: Multi-horizon modeling
  and its applications}}\/}.
\newblock Ph.D. thesis ETH Z{\"{u}}rich.
\bibitem[{Ahmadi \& Majumdar(2014)}]{Ahmadi2014a}
\bibinfo{author}{Ahmadi, A.~A.}, \& \bibinfo{author}{Majumdar, A.}
  (\bibinfo{year}{2014}).
\newblock \bibinfo{title}{{DSOS and SDSOS optimization: LP and SOCP-based
  alternatives to sum of squares optimization}}.
\newblock {\it \bibinfo{journal}{2014 48th Annual Conference on Information
  Sciences and Systems, CISS 2014}\/},  (pp. \bibinfo{pages}{2--6}).
\bibitem[{Anderson \& Nash(1987)}]{anderson1987}
\bibinfo{author}{Anderson, E.~J.}, \& \bibinfo{author}{Nash, P.}
  (\bibinfo{year}{1987}).
\newblock {\it \bibinfo{title}{{Linear programming in infinite-dimensional
  spaces : theory and applications}}\/}.
\newblock \bibinfo{publisher}{Wiley}.
\bibitem[{Atam et~al.(2015)Atam, Patteeuw, Antonov \& Helsen}]{atam2016}
\bibinfo{author}{Atam, E.}, \bibinfo{author}{Patteeuw, D.},
  \bibinfo{author}{Antonov, S.~P.}, \& \bibinfo{author}{Helsen, L.}
  (\bibinfo{year}{2015}).
\newblock \bibinfo{title}{{Optimal Control Approaches for Analysis of Energy
  Use Minimization of Hybrid Ground-Coupled Heat Pump Systems}}.
\newblock {\it \bibinfo{journal}{IEEE Transactions on Control Systems
  Technology}\/},  {\it \bibinfo{volume}{24}\/}.
\bibitem[{Bertsekas(1995)}]{bertsekas1995dynamic}
\bibinfo{author}{Bertsekas, D.~P.} (\bibinfo{year}{1995}).
\newblock {\it \bibinfo{title}{{Dynamic programming and optimal control}}\/}
  volume~\bibinfo{volume}{1}.
\newblock \bibinfo{publisher}{Athena scientific Belmont, MA}.
\bibitem[{Beuchat et~al.(2017)Beuchat, Warrington \& Lygeros}]{Beuchat2017}
\bibinfo{author}{Beuchat, P.~N.}, \bibinfo{author}{Warrington, J.}, \&
  \bibinfo{author}{Lygeros, J.} (\bibinfo{year}{2017}).
\newblock \bibinfo{title}{{Point-wise Maximum Approach to Approximate Dynamic
  Programming}}.
\newblock In {\it \bibinfo{booktitle}{Proceedings of the IEEE Conference on
  Decision and Control}\/}.
\bibitem[{Cerisola et~al.(2012)Cerisola, Latorre \& Ramos}]{cerisola2012}
\bibinfo{author}{Cerisola, S.}, \bibinfo{author}{Latorre, J.~M.}, \&
  \bibinfo{author}{Ramos, A.} (\bibinfo{year}{2012}).
\newblock \bibinfo{title}{{Stochastic dual dynamic programming applied to
  nonconvex hydrothermal models}}.
\newblock {\it \bibinfo{journal}{European Journal of Operational Research}\/},
  {\it \bibinfo{volume}{218}\/}, \bibinfo{pages}{687--697}.
\bibitem[{Darivianakis et~al.(2017)Darivianakis, Eichler, Smith \&
  Lygeros}]{Darivianakis2017}
\bibinfo{author}{Darivianakis, G.}, \bibinfo{author}{Eichler, A.},
  \bibinfo{author}{Smith, R.~S.}, \& \bibinfo{author}{Lygeros, J.}
  (\bibinfo{year}{2017}).
\newblock \bibinfo{title}{{A Data-Driven Stochastic Optimization Approach to
  the Seasonal Storage Energy Management}}.
\newblock {\it \bibinfo{journal}{IEEE Control Systems Letters}\/},  {\it
  \bibinfo{volume}{1}\/}, \bibinfo{pages}{394--399}.
\bibitem[{{De Ridder} et~al.(2011){De Ridder}, Diehl, Mulder, Desmedt \& {Van
  Bael}}]{DeRidder2011}
\bibinfo{author}{{De Ridder}, F.}, \bibinfo{author}{Diehl, M.},
  \bibinfo{author}{Mulder, G.}, \bibinfo{author}{Desmedt, J.}, \&
  \bibinfo{author}{{Van Bael}, J.} (\bibinfo{year}{2011}).
\newblock \bibinfo{title}{{An optimal control algorithm for borehole thermal
  energy storage systems}}.
\newblock {\it \bibinfo{journal}{Energy and Buildings}\/},  {\it
  \bibinfo{volume}{43}\/}, \bibinfo{pages}{2918--2925}.
\bibitem[{Ghaddar et~al.(2016)Ghaddar, Marecek \& Mevissen}]{Ghaddar2014}
\bibinfo{author}{Ghaddar, B.}, \bibinfo{author}{Marecek, J.}, \&
  \bibinfo{author}{Mevissen, M.} (\bibinfo{year}{2016}).
\newblock \bibinfo{title}{{Optimal Power Flow as a Polynomial Optimization
  Problem}}.
\newblock {\it \bibinfo{journal}{IEEE Transactions on Power Systems}\/},  {\it
  \bibinfo{volume}{31}\/}, \bibinfo{pages}{539--546}.
\bibitem[{Girardeau et~al.(2015)Girardeau, Leclere \& Philpott}]{girardeau2015}
\bibinfo{author}{Girardeau, P.}, \bibinfo{author}{Leclere, V.}, \&
  \bibinfo{author}{Philpott, A.~B.} (\bibinfo{year}{2015}).
\newblock \bibinfo{title}{{On the Convergence of Decomposition Methods for
  Multistage Stochastic Convex Programs}}.
\newblock {\it \bibinfo{journal}{Mathematics of Operations Research}\/},  {\it
  \bibinfo{volume}{40}\/}, \bibinfo{pages}{130--145}.
\bibitem[{Guigues(2018)}]{guigues2018}
\bibinfo{author}{Guigues, V.} (\bibinfo{year}{2018}).
\newblock \bibinfo{title}{{Inexact cuts in Deterministic and Stochastic Dual
  Dynamic Programming applied to linear optimization problems}}.
\newblock {\it \bibinfo{journal}{arXiv:1707.00812}\/}, .
\bibitem[{Guigues \& R{\"{o}}misch(2012)}]{guigues2012}
\bibinfo{author}{Guigues, V.}, \& \bibinfo{author}{R{\"{o}}misch, W.}
  (\bibinfo{year}{2012}).
\newblock \bibinfo{title}{{Sampling-Based Decomposition Methods for Multistage
  Stochastic Programs Based on Extended Polyhedral Risk Measures}}.
\newblock {\it \bibinfo{journal}{SIAM Journal on Optimization}\/},  {\it
  \bibinfo{volume}{22}\/}, \bibinfo{pages}{286--312}.
\bibitem[{Hern{\'{a}}ndez-Lerma(1989)}]{Hernandez1989}
\bibinfo{author}{Hern{\'{a}}ndez-Lerma, O.} (\bibinfo{year}{1989}).
\newblock {\it \bibinfo{title}{{Adaptive Markov Control Processes}}\/}
  volume~\bibinfo{volume}{79} of {\it \bibinfo{series}{Applied Mathematical
  Sciences}\/}.
\newblock \bibinfo{address}{New York, NY}: \bibinfo{publisher}{Springer New
  York}.
\bibitem[{Hern{\'{a}}ndez-Lerma \&
  Hern{\'{a}}ndez-Hern{\'{a}}ndez(1994)}]{HernandezLerma1994}
\bibinfo{author}{Hern{\'{a}}ndez-Lerma, O.}, \&
  \bibinfo{author}{Hern{\'{a}}ndez-Hern{\'{a}}ndez, D.} (\bibinfo{year}{1994}).
\newblock \bibinfo{title}{{Discounted Cost Markov Decision Processes on Borel
  Spaces: The Linear Programming Formulation}}.
\newblock {\it \bibinfo{journal}{Journal of Mathematical Analysis and
  Applications}\/},  {\it \bibinfo{volume}{183}\/}, \bibinfo{pages}{335--351}.
\bibitem[{Hern{\'{a}}ndez-Lerma \& Lasserre(2012)}]{HernandezLerma1970}
\bibinfo{author}{Hern{\'{a}}ndez-Lerma, O.}, \& \bibinfo{author}{Lasserre,
  J.~B.} (\bibinfo{year}{2012}).
\newblock {\it \bibinfo{title}{{Discrete-Time Markov Control Processes: Basic
  Optimality Criteria}}\/}.
\newblock \bibinfo{publisher}{Springer}.
\bibitem[{Jiang et~al.(2014)Jiang, Jing, Li, Wu \& Tang}]{Tang2014}
\bibinfo{author}{Jiang, X.~S.}, \bibinfo{author}{Jing, Z.~X.},
  \bibinfo{author}{Li, Y.~Z.}, \bibinfo{author}{Wu, Q.~H.}, \&
  \bibinfo{author}{Tang, W.~H.} (\bibinfo{year}{2014}).
\newblock \bibinfo{title}{{Modelling and operation optimization of an
  integrated energy based direct district water-heating system}}.
\newblock {\it \bibinfo{journal}{Energy}\/},  {\it \bibinfo{volume}{64}\/},
  \bibinfo{pages}{375--388}.
\bibitem[{Kamoutsi et~al.(2017)Kamoutsi, Sutter, Esfahani \&
  Lygeros}]{Kamoutsi2017}
\bibinfo{author}{Kamoutsi, A.}, \bibinfo{author}{Sutter, T.},
  \bibinfo{author}{Esfahani, P.~M.}, \& \bibinfo{author}{Lygeros, J.}
  (\bibinfo{year}{2017}).
\newblock \bibinfo{title}{{On Infinite Linear Programming and the Moment
  Approach to Deterministic Infinite Horizon Discounted Optimal Control
  Problems}}.
\newblock {\it \bibinfo{journal}{arXiv:1703.09005}\/}, .
\bibitem[{Korda et~al.(2017)Korda, Henrion \& Jones}]{korda2016}
\bibinfo{author}{Korda, M.}, \bibinfo{author}{Henrion, D.}, \&
  \bibinfo{author}{Jones, C.~N.} (\bibinfo{year}{2017}).
\newblock \bibinfo{title}{{Convergence rates of moment-sum-of-squares
  hierarchies for optimal control problems}}.
\newblock {\it \bibinfo{journal}{Systems and Control Letters}\/},  {\it
  \bibinfo{volume}{100}\/}, \bibinfo{pages}{1--5}.
\bibitem[{Lasota \& Mackey(1994)}]{lasota1994}
\bibinfo{author}{Lasota, A.}, \& \bibinfo{author}{Mackey, M.~C.}
  (\bibinfo{year}{1994}).
\newblock {\it \bibinfo{title}{{Chaos, Fractals, and Noise}}\/}
  volume~\bibinfo{volume}{97} of {\it \bibinfo{series}{Applied Mathematical
  Sciences}\/}.
\newblock \bibinfo{address}{New York, NY}: \bibinfo{publisher}{Springer New
  York}.
\bibitem[{Lasserre(2014)}]{Lasserre2014}
\bibinfo{author}{Lasserre, J.~B.} (\bibinfo{year}{2014}).
\newblock {\it \bibinfo{title}{{Moments, positive polynomials and their
  applications.}}\/}.
\newblock Series on Optimization and Its Applications.
\newblock \bibinfo{publisher}{Imperial College Press}.
\bibitem[{Lasserre et~al.(2008)Lasserre, Henrion, Prieur \&
  Tr{\'{e}}lat}]{Lasserre2007}
\bibinfo{author}{Lasserre, J.~B.}, \bibinfo{author}{Henrion, D.},
  \bibinfo{author}{Prieur, C.}, \& \bibinfo{author}{Tr{\'{e}}lat, E.}
  (\bibinfo{year}{2008}).
\newblock \bibinfo{title}{{Nonlinear Optimal Control via Occupation Measures
  and LMI-Relaxations}}.
\newblock {\it \bibinfo{journal}{SIAM Journal on Control and Optimization}\/},
  {\it \bibinfo{volume}{47}\/}, \bibinfo{pages}{1643--1666}.
\bibitem[{Lasserre \& Thanh(2013)}]{thanh2013}
\bibinfo{author}{Lasserre, J.~B.}, \& \bibinfo{author}{Thanh, T.~P.}
  (\bibinfo{year}{2013}).
\newblock \bibinfo{title}{{Convex underestimators of polynomials}}.
\newblock {\it \bibinfo{journal}{Journal of Global Optimization}\/},  {\it
  \bibinfo{volume}{56}\/}, \bibinfo{pages}{1--25}.
\bibitem[{Lofberg(2004)}]{Lofberg2004}
\bibinfo{author}{Lofberg, J.} (\bibinfo{year}{2004}).
\newblock \bibinfo{title}{{YALMIP : a toolbox for modeling and optimization in
  MATLAB}}.
\newblock In {\it \bibinfo{booktitle}{2004 IEEE International Conference on
  Computer Aided Control Systems Design}\/} (pp. \bibinfo{pages}{284--289}).
\bibitem[{Molzahn \& Hiskens(2015)}]{Molzahn2015}
\bibinfo{author}{Molzahn, D.~K.}, \& \bibinfo{author}{Hiskens, I.~A.}
  (\bibinfo{year}{2015}).
\newblock \bibinfo{title}{{Sparsity-Exploiting Moment-Based Relaxations of the
  Optimal Power Flow Problem}}.
\newblock {\it \bibinfo{journal}{IEEE Transactions on Power Systems}\/},  {\it
  \bibinfo{volume}{30}\/}, \bibinfo{pages}{3168--3180}.
\bibitem[{O'Donoghue et~al.(2011)O'Donoghue, Wang \& Boyd}]{ODonoghue2011}
\bibinfo{author}{O'Donoghue, B.}, \bibinfo{author}{Wang, Y.}, \&
  \bibinfo{author}{Boyd, S.} (\bibinfo{year}{2011}).
\newblock \bibinfo{title}{{Min-max approximate dynamic programming}}.
\newblock In {\it \bibinfo{booktitle}{2011 IEEE International Symposium on
  Computer-Aided Control System Design (CACSD)}\/} (pp.
  \bibinfo{pages}{424--431}).
\bibitem[{Pereira \& Pinto(1991)}]{Pereira1991}
\bibinfo{author}{Pereira, M.}, \& \bibinfo{author}{Pinto, L.}
  (\bibinfo{year}{1991}).
\newblock \bibinfo{title}{{Multi-stage stochastic optimization applied to
  energy planning}}.
\newblock {\it \bibinfo{journal}{Mathematical Programming}\/},  {\it
  \bibinfo{volume}{52}\/}, \bibinfo{pages}{359--375}.
\bibitem[{Philpott \& Guan(2008)}]{philpott2008}
\bibinfo{author}{Philpott, A.}, \& \bibinfo{author}{Guan, Z.}
  (\bibinfo{year}{2008}).
\newblock \bibinfo{title}{{On the convergence of stochastic dual dynamic
  programming and related methods}}.
\newblock {\it \bibinfo{journal}{Operations Research Letters}\/},  {\it
  \bibinfo{volume}{36}\/}, \bibinfo{pages}{450--455}.
\bibitem[{Powell(2011)}]{powell2011}
\bibinfo{author}{Powell, W.~B.} (\bibinfo{year}{2011}).
\newblock {\it \bibinfo{title}{{Approximate dynamic programming : solving the
  curses of dimensionality}}\/}.
\newblock \bibinfo{publisher}{Wiley}.
\bibitem[{Putinar \& Vasilescu(1999)}]{Putinar}
\bibinfo{author}{Putinar, M.}, \& \bibinfo{author}{Vasilescu, F.-H.}
  (\bibinfo{year}{1999}).
\newblock \bibinfo{title}{{Positive polynomials on semi-algebraic sets}}.
\newblock {\it \bibinfo{journal}{Comptes Rendus de l'Acad{\'{e}}mie des
  Sciences - Series I - Mathematics}\/},  {\it \bibinfo{volume}{328}\/},
  \bibinfo{pages}{585--589}.
\bibitem[{Savorgnan et~al.(2009)Savorgnan, Lasserre \& Diehl}]{Savorgnan2009}
\bibinfo{author}{Savorgnan, C.}, \bibinfo{author}{Lasserre, J.~B.}, \&
  \bibinfo{author}{Diehl, M.} (\bibinfo{year}{2009}).
\newblock \bibinfo{title}{{Discrete-time stochastic optimal control via
  occupation measures and moment relaxations}}.
\newblock {\it \bibinfo{journal}{Proceedings of the IEEE Conference on Decision
  and Control}\/},  (pp. \bibinfo{pages}{519--524}).
\bibitem[{Summers et~al.(2012)Summers, Kariotoglou, Kamgarpour, Summers \&
  Lygeros}]{summers2012}
\bibinfo{author}{Summers, T.~H.}, \bibinfo{author}{Kariotoglou, N.},
  \bibinfo{author}{Kamgarpour, M.}, \bibinfo{author}{Summers, S.}, \&
  \bibinfo{author}{Lygeros, J.} (\bibinfo{year}{2012}).
\newblock \bibinfo{title}{{Approximate Dynamic Programming via Sum of Squares
  Programming}}.
\newblock In {\it \bibinfo{booktitle}{European Control Conference}\/} (pp.
  \bibinfo{pages}{191--197}).
\bibitem[{Sundstr{\"{o}}m \& Guzzella(2009)}]{Sundstrom2009}
\bibinfo{author}{Sundstr{\"{o}}m, O.}, \& \bibinfo{author}{Guzzella, L.}
  (\bibinfo{year}{2009}).
\newblock \bibinfo{title}{{A generic dynamic programming Matlab function}}.
\newblock {\it \bibinfo{journal}{Proceedings of the IEEE International
  Conference on Control Applications}\/},  (pp. \bibinfo{pages}{1625--1630}).
\bibitem[{Taylor(2015)}]{Taylor2015a}
\bibinfo{author}{Taylor, J.~A.} (\bibinfo{year}{2015}).
\newblock {\it \bibinfo{title}{{Convex Optimization of Power Systems}}\/}.
\newblock \bibinfo{publisher}{Cambridge University Press}.
\bibitem[{Wachter \& Biegler(2006)}]{Wachter2006}
\bibinfo{author}{Wachter, A.}, \& \bibinfo{author}{Biegler, L.~T.}
  (\bibinfo{year}{2006}).
\newblock \bibinfo{title}{{On the implementation of an interior-point filter
  line-search algorithm for large-scale nonlinear programming}}.
\newblock {\it \bibinfo{journal}{Mathematical Programming}\/},  {\it
  \bibinfo{volume}{106}\/}, \bibinfo{pages}{25--57}.
\bibitem[{Waki et~al.(2006)Waki, Kim, Kojima \& Muramatsu}]{Waki2006}
\bibinfo{author}{Waki, H.}, \bibinfo{author}{Kim, S.}, \bibinfo{author}{Kojima,
  M.}, \& \bibinfo{author}{Muramatsu, M.} (\bibinfo{year}{2006}).
\newblock \bibinfo{title}{{Sums of Squares and Semidefinite Program Relaxations
  for Polynomial Optimization Problems with Structured Sparsity}}.
\newblock {\it \bibinfo{journal}{SIAM Journal on Optimization}\/},  {\it
  \bibinfo{volume}{17}\/}, \bibinfo{pages}{218--242}.
\bibitem[{Wang et~al.(2014)Wang, {O 'Donoghue} \& Boyd}]{Wang}
\bibinfo{author}{Wang, Y.}, \bibinfo{author}{{O 'Donoghue}, B.}, \&
  \bibinfo{author}{Boyd, S.} (\bibinfo{year}{2014}).
\newblock \bibinfo{title}{{Approximate Dynamic Programming via Iterated Bellman
  Inequalities}}.
\newblock {\it \bibinfo{journal}{International Journal of Robust and Nonlinear
  Control}\/},  {\it \bibinfo{volume}{25}\/}, \bibinfo{pages}{1472--1496}.
\bibitem[{Zakeri et~al.(2000)Zakeri, Philpott \& Ryan}]{zakeri2000}
\bibinfo{author}{Zakeri, G.}, \bibinfo{author}{Philpott, A.~B.}, \&
  \bibinfo{author}{Ryan, D.~M.} (\bibinfo{year}{2000}).
\newblock \bibinfo{title}{{Inexact Cuts in Benders Decomposition}}.
\newblock {\it \bibinfo{journal}{SIAM Journal on Optimization}\/},  {\it
  \bibinfo{volume}{10}\/}, \bibinfo{pages}{643--657}.
\bibitem[{Zou et~al.(2018)Zou, Ahmed \& Sun}]{zou2018}
\bibinfo{author}{Zou, J.}, \bibinfo{author}{Ahmed, S.}, \&
  \bibinfo{author}{Sun, X.~A.} (\bibinfo{year}{2018}).
\newblock \bibinfo{title}{{Stochastic dual dynamic integer programming}}.
\newblock {\it \bibinfo{journal}{Mathematical Programming}\/},  (pp.
  \bibinfo{pages}{1--42}).

\end{thebibliography}
\endgroup

}
\newpage
\section*{Appendix}
\begin{figure}[H]
	\centering
	\includegraphics[trim={0 1cm 0 0},clip,scale=1]{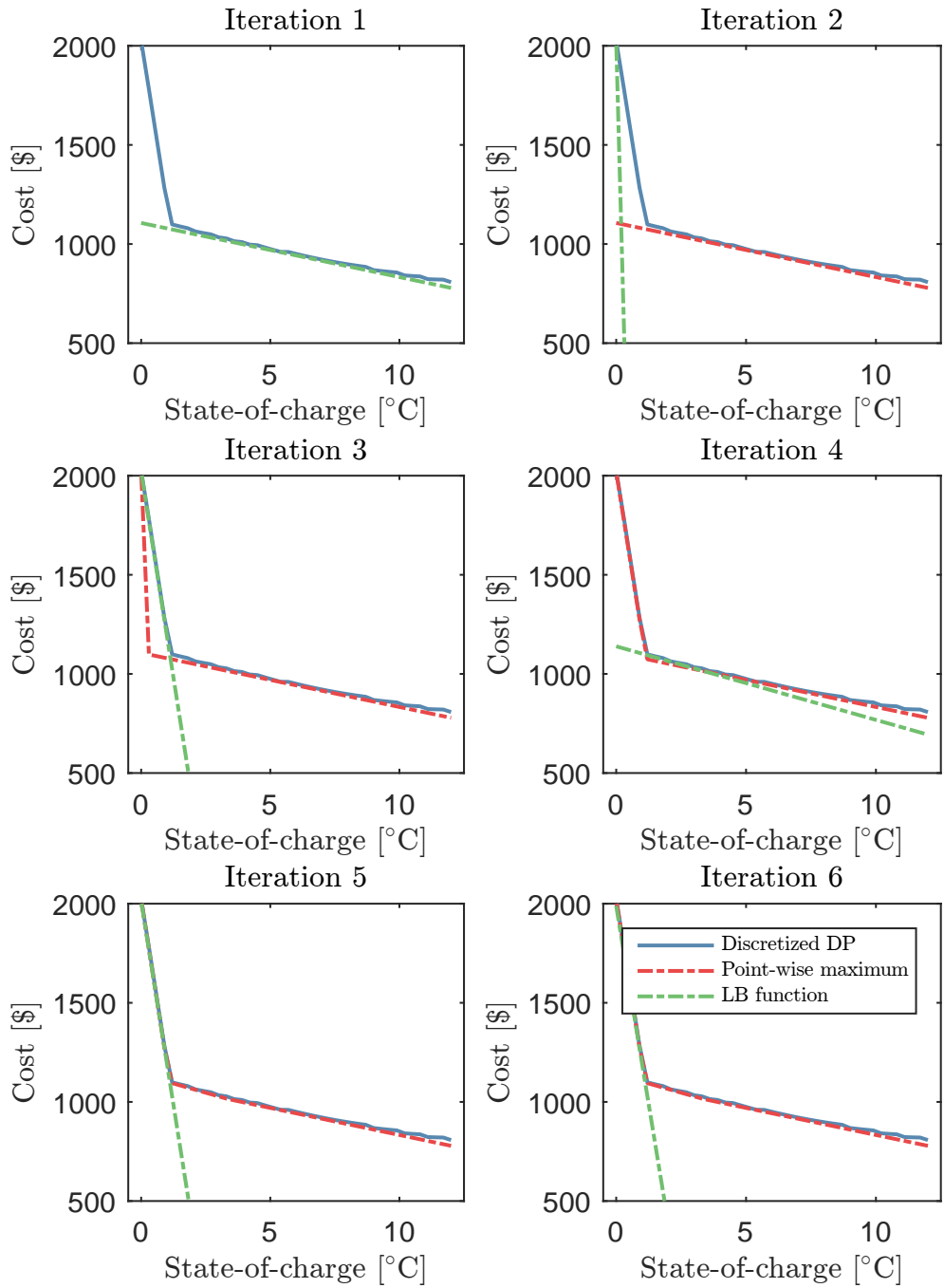}
	\caption[Sequence of six backward recursions for the single storage example]{Single storage example of Section \ref{numerics}: Cost of stored energy in the beginning of April ($t=11$) approximated using affine basis functions and $2k=4$, shown for six DDP iterations. New lower-bounding functions (LB function) are shown in green. The point-wise maximum of all previous lower-bounding functions is shown in red.}\label{fig:proofillustration}
\end{figure}
\begin{figure}[H]
	  \centering
      \includegraphics[scale=1]{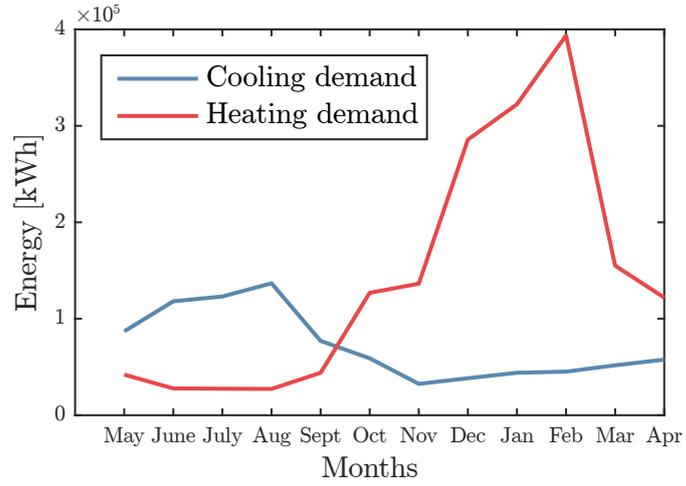}
      \caption[Heating and cooling demand of the Empa Campus]{Heating and cooling demand $d_{\textrm{heat},t}$ and $d_{\textrm{cooling},t}$ of the single storage application over a year}\label{fig:heatingdemand}
\end{figure}

\begin{figure}[H]
	\centering
	\includegraphics[scale=1]{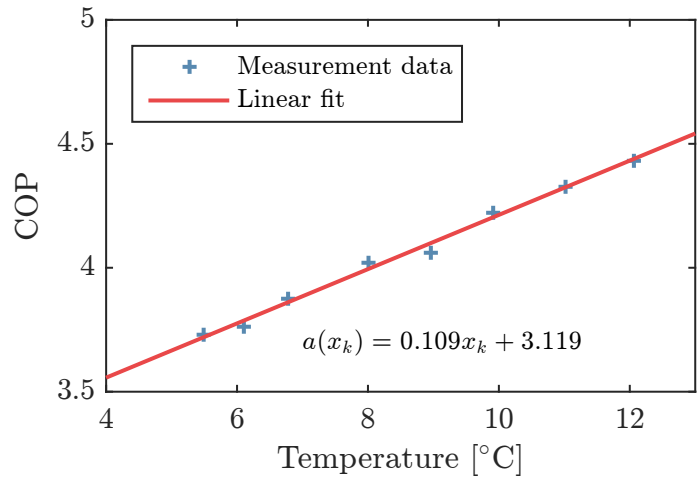}
	\caption[Fitting of the temperature-dependent COP]{Fitting of the inlet temperature-dependent \ac{COP} $a(x_t)$ of the \ac{HP} }\label{fig:cop}
\end{figure}
\begin{table}[H]
	\caption[Energy system data]{Energy system data}
	\label{table:inputdata}
	\begin{center}
		\begin{tabular}{ l l l l}
			& Parameter &Single storage&Multiple storage\\ 
			\midrule
			\textbf{Grid Feeders} \\ Power & Cost $c_e$:& 0.096\$/kWh &0.096\$/kWh\\
			\midrule
			Gas & Cost $c_g$:& 0.063\$/kWh &0.063\$/kWh\\
			\midrule
			\textbf{Conversion} \\ \ac{HP}s& \ac{COP} $a(x_t)$:& see Fig.~\ref{fig:cop}&see Fig.~\ref{fig:cop}\\& Capacity $\overline{u}_{\rm out}$:& 60kW & 60kW \\
			\midrule
			Boiler& Efficiency $a_{\textrm{b}}$:& 0.7&0.7\\ & Capacity $\overline{u}_{b}$:& 285 kW & 855 kW\\
			\midrule
			Chiller& \ac{COP} $a_{\rm ch}$:& 5&5\\
			&  Capacity $\overline{u}_{\rm ch}$: &150kW&450kW\\
			\midrule
			\textbf{Storage} \\
			Boreholes&Conductivity $\lambda$: & 0.621kW/$^\circ$C&0.621kW/$^\circ$C$\pm 10\%$\\
			& Inertia $mc$: &14805kWh/$^\circ$C&14805kWh/$^\circ$C\\
			& Capacity $\overline{u}_{in}$:&100kW&100kW\\
			& Ground $T_\infty$:&12$^\circ$C &12$^\circ$C\\
			& Range $[\underline{T},\overline{T}]$: &[0,12]$^\circ$C&[0,12]$^\circ$C\\
			\bottomrule
		\end{tabular}
	\end{center}
\end{table}
\begin{table}[H] 
	\caption[Accumulated solver time over all iterations of the Moment DPP approach]{Accumulated MOSEK\textsuperscript{TM} solver time over all iterations of the Moment DDP approach obtained on a PC with an Intel-i5 2.2GHz CPU with 8GB RAM for a tolerance of $\epsilon=10^{-4}$ (after scaling the problem data to the unit box)}\label{table:times}
	\begin{center}
		\begin{tabular}{ l l l}
			Basis functions/Relaxation&Single storage&Multiple storage\\ 
			\midrule
			Affine value functions, $2k=2$&4.77s&6.39s\\
			Affine value functions, $2k=4$&5.77s&18.65min\\
			Quadratic value functions, $2k=4$&25.23s&28.24min\\
			\midrule
		\end{tabular}
	\end{center}
\end{table}
\end{document}